\PassOptionsToPackage{dvipsnames}{xcolor}
\documentclass[11pt]{amsart}

\usepackage[utf8]{inputenc}

\usepackage[margin=1in]{geometry}

\usepackage{amsmath, amsthm, amssymb}
\usepackage{mathtools}
\usepackage{mathrsfs}
\usepackage{stmaryrd}
\usepackage{stix}

\usepackage{amsmath,amssymb}
\usepackage{mathtools}
\usepackage{amsthm}
\usepackage{mathrsfs}
\usepackage{enumitem}
\usepackage{tikz}
\usetikzlibrary{calc}
\usepackage{nicefrac}
\usepackage{hyperref}

\urldef{\codeurl}\url{https://git-ce.rwth-aachen.de/Nicolas.espinoza.c/ls-nbpm-project/-/tree/v1.0-sisc}

\usepackage{siunitx}

\usepackage{graphicx}

\usepackage{tikz}
\newcommand*\circled[1]{\tikz[baseline=(char.base)]{
            \node[shape=circle,draw,inner sep=2pt] (char) {#1};}}

\usepackage{booktabs}
\usepackage{multirow}

\usepackage{float}
\usepackage{caption}
\usepackage{subcaption}

\usepackage{algorithm}
\usepackage{algorithmic}

\usepackage{tikz}
\usepackage{standalone}

\usepackage{calligra}
\usepackage{oubraces}

\usepackage{soul}   %

\usepackage{comment}

\usepackage[sort&compress,numbers,square]{natbib}
\usepackage{hyperref}
\hypersetup{
  colorlinks=true,
  linkcolor=Purple,
  citecolor=Orange,
  urlcolor=Purple
}

\theoremstyle{plain}
\newtheorem{theorem}{Theorem}[section]

\newtheorem{lemma}[theorem]{Lemma}

\theoremstyle{definition}

\newtheorem{Remark}[theorem]{Remark}

\newcommand\bx{\boldsymbol{x}}

\newcommand\bbeta{{\boldsymbol{u}}}
\newcommand\calP{{\mathcal{P}}}
\newcommand\calT{{\mathcal{T}}}

\newcommand\frakh{\mathfrak{h}}

\newcommand\bu{\boldsymbol{u}}
\newcommand\bv{\boldsymbol{v}}

\newcommand{\mesh}{\calP}

\newcommand{\bn}{{\boldsymbol{n}}}
\newcommand{\mean}[1]{\{\kern-1.1mm\{#1\}\kern-1.1mm\}}

\newcommand{\ndg}[1]{| \kern -.25mm \|{#1}| \kern -.25mm \|}
\newcommand{\ndgp}[1]{| \kern -.25mm \|{#1}| \kern -.25mm \|_{\rm \mesh}}
\newcommand{\ndgps}[1]{| \kern -.25mm \|{#1}| \kern -.25mm \|_{\rm \mesh,s}}
\newcommand{\R}{\mathbb{R}}
\newcommand{\N}{\mathbb{N}}

\newcommand{\dive}{\operatorname{div}}

\newcommand{\ih}{\ensuremath{i_h}} %
\newcommand{\Norm}[1]{\left\|#1\right\|}
\newcommand{\Ltwo}[1]{\left\|#1\right\|_{0,\,\Omega}}

\newcommand{\Lsnorm}[1]{\left\|#1\right\|_{\mathrm{LS}}}
\newcommand{\inner}[2]{\left\langle #1,\, #2 \right\rangle_{\Omega}}

\newcommand{\ncdg}[1]{| \kern -.25mm \|{#1}| \kern -.25mm \|_{\rm DG}}

\newcommand{\bnabla}{\nabla_{\kern-.05cm\mesh}^{}}

\title[NBPM and transport]{The nodally bound-preserving finite element method for the transport equation}

\author[N.~Espinoza-Contreras]{Nicol\'as Espinoza-Contreras}
\address[N.~Espinoza-Contreras]{Chair for Computational Analysis of Technical Systems, RWTH Aachen University, Aachen, Germany}
\email{espinoza@ssd.rwth-aachen.de}

\author[G.R.~Barrenechea]{Gabriel R. Barrenechea}
\address[G.R.~Barrenechea]{Department of Mathematics and
    Statistics, University of Strathclyde, 26 Richmond Street,
    Glasgow, G1 1XH United Kingdom}
\email{gabriel.barrenechea@strath.ac.uk}

\author[D.R.Q.~Pacheco]{Douglas R. Q. Pacheco}
\address[D.R.Q.~Pacheco]{Chair for Computational Analysis of Technical Systems, RWTH Aachen University, Aachen, Germany}
\email{pacheco@ssd.rwth-aachen.de}

\begin{document}
\maketitle

\noindent Keywords: bound-preserving FEM; transport equation; error estimates; Anderson Acceleration; variable density flow.

\begin{abstract}
Numerical approximations of transport problems, even when stabilised, often violate the bounds respected by the exact solution. Depending on the problem being solved, this can produce non-physical results, such as negative densities or higher-than-unity concentrations. Various bound-preserving numerical methods exist, yet they are usually either conditionally stable or only first-order accurate. In this context, the present work extends the nodally bound-preserving finite element method to the linear transport equation. As a baseline discretisation we use the  least-squares finite element method, which is instrumental in achieving unconditional stability for second-order time stepping. It also leads to symmetric algebraic systems, thereby enabling simple, efficient solvers, which we further enhance through Anderson acceleration. Unconditional stability is proved for the backward Euler and Crank--Nicolson discretisations. We also derive error estimates for the fully discrete second-order scheme and provide various numerical examples, including two-phase incompressible flows.
\end{abstract}

\section{Introduction}
\label{sec:intro}

The advection equation is the simplest of hyperbolic problems, which is why it is a common starting point to test new numerical methods. A thorough overview on the numerics of transport problems deserves a review work in its own right, so we simply quote here the reviews  \cite{LeVeque_2002} for finite volumes, \cite{Shu2016} for discontinuous Galerkin, and \cite{Ern-Guermond-III-2021} for finite element methods, all three in the context of more general hyperbolic problems.

One basic property of the (linear) transport equation is that the solution can be computed using the characteristics method. As a consequence, the values of the solution are bounded at any time by the extrema of the initial condition. Conserving this property in the discrete setting is far from immediate and has thus been an extensive research topic over the past few decades. Since Godunov's Theorem (see \cite{Godunov}), it is known that discretisations that preserve the bounds of the initial conditions must be nonlinear if higher orders of approximation are desired. This has given rise to numerous nonlinear discretisations that guarantee bound-preservation (or more stringent conditions, like Local Extrema Diminishing, or Total Variation Diminishing). We refer to \cite{ZS2011,KuzminH23,Ern-Guermond-III-2021} for very comprehensive surveys on the topic. 

The methods covered in those reviews are designed to satisfy stronger conditions than just bound preservation, while for many applications bound-preservation may be enough to yield stable, accurate approximations. A noticeable example is incompressible flow with variable density. There, the density comes from a linear transport equation, with convective term equal to the (solenoidal) flow velocity. In turn, the viscosity enters the Navier--Stokes momentum equation multiplying the density. So, maintaining a positive density is essential not only for physical consistency, but also for numerical stability, as a negative viscosity would destabilise the simulation. Moreover, in the seminal work \cite{Guermond2000}, the momentum equation was rewritten in a skew-symmetric form, where the square root of the density appears directly as a coefficient. So, maintaining a positive density is indispensable. As a matter of fact, in the analysis of virtually all works that followed, the positivity of the density is an essential---though sometimes overlooked---assumption \cite{Guermond2009,GueSal11,Salgado2013,Chen2020,Cai2021,Pan2024}.

Motivated, among other reasons, by the discussion above, in \cite{BGPV-2024} the {\it Nodally Bound-Preserving Method} (NBPM) was introduced as a way to hardwire the nodal bounds into the finite element method. Starting from a \textit{baseline discretisation}, that is, a stable discretisation of the PDE at hand, the method acts by defining a projection of the finite element unknown onto the closed convex set of admissible functions, i.e., the set of functions that satisfy the physical bounds at the degrees of freedom. Since this projection is not injective,  a kernel is introduced in the process and so a stabilising term is added to render the problem well-posed. To analyse the method, in \cite{BGPV-2024} it was reinterpreted as a variational inequality (see also \cite{Kirby-NumMath,Keith-FCOM} for other approaches using variational inequalities in the finite element discretisation). This method has also been extended to time-dependent problems in \cite{Amiri-Barrenechea-Pryer-2025} and to tensor-valued PDEs in \cite{ABP-26-M3AS}, both using the Continuous Interior Penalty (CIP) as baseline discretisation. Interestingly, the analysis of the time-dependent versions of this method has been restricted to the first-order implicit Euler method for time integration.

The purpose of this work is to extend the nodally bound-preserving method to the transport equation. The choice of baseline discretisation is the $\theta$-scheme for time stepping, and the Least-Squares (LS) finite element method (see, e.g., \cite{CaJi88,BoGu16}) for space discretisation. The method is presented and analysed in the stabilised form (i.e., as an equality), although some results are proven with the help of its equivalence with a variational inequality. This work advances the NBP FEM literature in the following aspects:
\begin{enumerate}
    \item  So far, only implicit Euler time-stepping has been analysed in combination with the NBPM. Here, we borrow techniques from, e.g., \cite{Pacheco2024}, to show stability of the method, and also derive error estimates for the Crank--Nicolson version. Crucially, neither of these results requires a CFL condition.%

\item We improve on the solution of the nonlinear system arising from the method at each time step. In \cite{BGPV-2024} a preconditioned Richardson iteration was used with good results, while in \cite{ABP-26-M3AS} a projected extragradient method was used to solve the variational inequality at each time step. In this work we have chosen to accelerate the fixed-point iterations using the Anderson algorithm (see \cite{Anderson-1965}). This has led to significant savings in the solution of the nonlinear systems and the overall discretisation as a consequence (see Section \ref{sec:Numer} for details and evidence on this). 

\item Finally, we have applied the current method to the approximation of incompressible flows with variable density. We have used the strategy proposed in \cite{Guermond2000} for the coupled problem together with a stable IMEX method that allows stably decoupling the momentum and density equations. The setup we have applied this methodology to is the Rayleigh--Taylor instability, where it is shown that our method substantially improves the quality of the numerical results, and it is even essential for the simulation to run. In fact, it is shown that for the baseline discretisation the density reaches a negative value in finite time, which makes the simulation crash, while the current method prevents this.
\end{enumerate}

The rest of the manuscript is organised as follows. After finishing the introduction with some basic notation and presentation of the model problem, in Section~\ref{sec:FEM} we present the current method. The method's stability is analysed in Section~\ref{Sec:stability} and the error analysis is carried out in Section~\ref{Sec:error}. Then, in Section~\ref{Sec:Anderson} we describe the Anderson acceleration, and apply it in the numerical solution of the nonlinear systems in the numerical experiments presented in Section~\ref{sec:Numer}. %

\subsection{Notations and the model problem}\label{sec:modelpb}
Let $\Omega\subseteq\R^d, d=1,2,3$ be an open, bounded,  polygonal/polyhedral, Lipschitz  domain, with outward
unit normal $\bn$.  We use standard notation for Sobolev spaces, aligned with, e.g.,  \cite{Ern-Guermond-I-2021}. In particular,
we denote by
$\|\cdot\|_{0,p,\Omega}$ the $L^p(\Omega)$-norm, and by $\|\cdot\|_{k,p,\Omega}$ the $W^{k,p}(\Omega)$-norm, omitting the subscript $p$ when
$p=2$. The inner product in $L^2(\Omega)$ will be denoted by $\langle\cdot,\cdot\rangle_\Omega^{}$, and we will not distinguish between inner products/norms of scalar and vector-valued functions.

In this work, we will analyse the following transport equation: find $\rho:(0,T]\times \Omega\to\R$ such that
\begin{align}
\partial_t\rho +\bbeta\cdot\nabla\rho &=\,0\qquad\textrm{in}\, (0,T]\times \Omega=: Q\,,\label{Transport:Eq}\\
\rho &=\, \rho_0^{}\qquad\textrm{in}\;\{0\}\times \Omega\,.\nonumber
\end{align}
We will assume that the convective velocity $\bbeta$ satisfies $\dive\bbeta=0$ in $Q$, and
that $\bbeta\cdot\bn=0$ on $(0,T]\times \partial\Omega$.  This last point justifies the absence of
boundary conditions on $\rho$. The initial condition $\rho_0^{}$ is assumed to be non-negative. So, defining $\kappa:= \textrm{ess sup}\,\{ \rho_0(\bx): \bx\in\Omega\}$, it is well known (see, e.g., \cite{Guermond2000}) that the method of characteristics can be used to show that  that the solution of \eqref{Transport:Eq} satisfies
\begin{equation}\label{Bounds-rho}
\rho\in [0,\kappa]\qquad\textrm{a.e.  in}\;\Omega,
\end{equation}
for all times $t > 0$.

\begin{Remark}
Although in the theoretical exposition we consider the interval $[0,\kappa]$, for simplicity, the NBP finite element method presented below accommodates more general bounds $[\kappa_1,\kappa_2]$ without any modifications. That is in fact the case in most of our numerical experiments. 
\end{Remark}

 \section{The discrete formulation}\label{sec:FEM}

Let $\{\mathcal{T}_h\}_{h>0}$  be a shape-regular family of simplicial triangulations of  $\Omega$.
Associated to each $\mathcal{T}_h$, and for $k\ge 1$,  we define the finite element space
\begin{equation}\label{Def:FE-spaces}
V_h:= \{ \phi_h\in C^0(\overline{\Omega}): \phi_h|_K\in \mathbb{P}_k(K)\, \forall\, K\in \calT_h\}\,.
\end{equation}
We denote by $\{\bx_1^{},\ldots,\bx_M^{}\}$ the set of  nodes of $\calT_h^{}$ (including the boundary nodes), and
by $\{\phi_1^{},\ldots,\phi_M^{}\}$ the associated Lagrangian basis.

The diameter of a set \(G \subset \mathbb{R}^d\) is denoted by \(h_G\), and the mesh size is defined as \(h = \max\{h_{K} : K \in \calT_h\}\). We also define the mesh function \(\mathfrak{h}\) as a continuous, element-wise linear function that represents a local average of element diameters. To construct this, we introduce the set of \textit{vertices} of the mesh, \(\boldsymbol{v}_1, \ldots, \boldsymbol{v}_I\), and define \(\mathfrak{h}\) as the piecewise linear function specified by the nodal values
\begin{equation}\label{mesh-function-definition}
	\frakh(\boldsymbol{v}_i^{})=\dfrac{\sum_{K: \boldsymbol{v}_i^{}\in K}h_K^{}}{\# \{K: \boldsymbol{v}_i^{}\in K\}}\,.
\end{equation}

Next, we recall that the Lagrange interpolation operator is defined by
(see, e.g., \cite[Chapter~11]{Ern-Guermond-I-2021})
\begin{align}
	i_{h}&:\mathcal{C}^{0}\big(\overline{\Omega}\big)\longrightarrow V_h,\nonumber\\
	v&\longmapsto i_{h}v=\sum_{j=1}^{M}v(\boldsymbol{x}_{j})\phi_{j}. \label{lagrange}
\end{align}
It is well-known (see, e.g., \cite[Proposition~11.12]{Ern-Guermond-I-2021}) that the Lagrange interpolant
satisfies the following approximation property:  Let  $1\leq \ell \leq k$ and
	$i_{h}$ be the Lagrange interpolant. Then, there exists $C>0$,
	independent of $h$,  such that for all $h$ and $\phi\in H^{\ell+1}(\Omega)$ the
	following holds:
	\begin{equation}
		\|\phi- i_{h}\phi \|_{0,K}+h_{K} |\phi-i_{h}\phi |_{1,K}\leq Ch^{\ell +1}_{K}|\phi |_{\ell +1,K}\qquad\forall K\in \calT_h\,.\label{lagranges}
	\end{equation}

For the time discretisation, we will consider a uniform time-step $\tau=T/N$, $N\in\N$.  We define $t^n=n\tau$ for
$n=0,1,\ldots,N$. The analysis presented
below also extends, without significant differences, to the case of non-uniform time stepping.

\subsection{The baseline discretisation: least-squares finite element methods}

The first ingredient in the formulation of the nodally bound-preserving method is the choice of a baseline discretisation. In this work we use the Least-Squares finite element method (see, e.g., \cite{BoGu16}) as baseline for space discretisation, while the time discretisation is  based on
the $\theta$-scheme. More precisely, the LS finite element method reads as follows: For a chosen $\theta\in\big[\tfrac12,1\big]$, find $\rho_h^{n+1}\in V_h^{}$ such that
\begin{equation}\label{LS-FEM}
\langle\rho_h^{n+1}-\rho_h^n + \tau\theta\, \bbeta^{n+1}\cdot\nabla\rho_h^{n+1}+ \tau(1-\theta)\, \bbeta^{n}\cdot\nabla\rho_h^{n}, \phi_h^{}+\tau\theta\, \bbeta^{n+1}\cdot\nabla\phi_h^{}\rangle_\Omega=0\,,
\end{equation}
for all $\phi_h^{}\in V_h^{}$. A stability analysis of this method can be found in \cite{Pacheco2024}.

\subsection{The  LS-NBP method}

The main motivation of the nodally bound-preserving method presented in \cite{BGPV-2024} is to build a finite element method where the condition  \eqref{Bounds-rho} is hardwired
into its definition. The next ingredient of the method is the definition of the set of  \textit{compliant} finite element functions, that is
\begin{equation}\label{Def:Vplus}
V_h^+:=\{v_h\in V_h^{}: v_h^{}(\bx_i^{})\in [0,\kappa]\;\textrm{for all}\; i=1,\ldots,M\}\,.
\end{equation}
Associated to this set, we define the projection $(\cdot)^+:V_h^{}\to V_h^+$ as follows
\begin{equation}\label{Def:plus-operator}
v_h^{}\longmapsto v_h^+:=\sum_{i=1}^M \max\{\min\{\kappa,v_h^{}(\bx_i^{})\}, 0\}\phi_i^{}\in V_h^+\,.
\end{equation}
Using this mapping,  every $v_h\in V_h^{}$ can be decomposed as
\begin{equation*}
v_h^{}= v_h^+ + (v_h^{}-v_h^+) =: v_h^+ +v_h^-\,.
\end{equation*}

With these notations, the LS-NBPM for \eqref{Transport:Eq} reads as follows: Start with $\rho_h^0=i_h(\rho_0^{})$, and for $n\in\{0,\ldots,N-1\}$, find $\rho_h^{n+1}\in V_h^{}$ such that
\begin{equation}\label{LS-NBPM}
\left\langle  (\rho_h^{n+1})^+ -(\rho_h^n)^+ + \tau\theta\, \bbeta^{n+1}\cdot\nabla(\rho_h^{n+1})^++ \tau(1-\theta)\, \bbeta^{n}\cdot\nabla(\rho_h^{n})^+, \phi_h^{}+\tau\theta\, \bbeta^{n+1}\cdot\nabla\phi_h^{}\right\rangle_\Omega
+ s( (\rho_h^{n+1})^-, \phi_h^{})=0\,,
\end{equation}
for all $\phi_h^{}\in V_h^{}$. Here, $s(\cdot,\cdot)$ is the following stabilising bilinear form
\begin{equation}\label{Def:stab-form}
s(\phi_h,\psi_h)=\alpha\sum_{i=1}^M \left(\frakh (\boldsymbol{x}_{i})^d+\|\bbeta\|_{\infty,\omega_i}\tau\,\frakh (\boldsymbol{x}_{i})^{d-1}\right)\phi_h^{}(\bx_i^{})\psi_h^{}(\bx_i^{})\,,
\end{equation}
where $\alpha>0$ is a non-dimensional constant and $\omega_i:=\cup\{ K\in\calT_h: \bx_i\in K\}$.

The bilinear form $s(\cdot,\cdot)$ satisfies the following properties (see  \cite[Lemma~3.1 and Eq.~(3.8)]{BGPV-2024}):
\begin{align}
s(\phi_h^--\psi_h^-,\phi_h^+-\psi_h^+) &\ge\, 0\,,\label{Eq:s-monotone-0}\\
s(\phi_h^-,\phi_h^+) &\ge\, 0\,,\label{Eq:s-monotone}\\
s(\phi_h^{},\phi_h^{}) &\ge\, C\,\|\phi_h^{}\|_{0,\Omega}^2\,,
\end{align}
for every $\phi_h^{}, \psi_h\in V_h^{}$, where $C >0$ is independent of $h$.

\begin{Remark}
   The problem \eqref{LS-NBPM} can be written in an equivalent way as follows:
Find $\rho_h^{n+1}\in V_h^{}$ such that
\begin{align}
&\underbrace{\left\langle  (\rho_h^{n+1})^+  + \tau\theta\, \bbeta^{n+1}\cdot\nabla(\rho_h^{n+1})^+, \phi_h^{}+\tau\theta\, \bbeta^{n+1}\cdot\nabla\phi_h^{}\right\rangle_\Omega}_{=: a^{n+1}((\rho_h^{n+1})^+,\phi_h^{})}
+ \ s( (\rho_h^{n+1})^-, \phi_h^{})\nonumber \\
&=\, \underbrace{\left\langle(\rho_h^n)^+ -  \tau(1-\theta)\, \bbeta^{n}\cdot\nabla(\rho_h^{n})^+, \phi_h^{}+\tau\theta\, \bbeta^{n+1}\cdot\nabla\phi_h^{}\right\rangle_\Omega^{}}_{=:L^{n+1}(\phi_h^{})}\,,\label{LS-NBPM-bis}
\end{align}
for all $\phi_h^{}\in V_h^{}$.  We observe that for every $n\in \{1,\ldots,N\}$ the bilinear form
$a^n(\cdot,\cdot)$ is symmetric and elliptic. In fact, integrating by parts and using that $\bbeta$ is solenoidal
and has vanishing normal component on $\partial\Omega$ we get the following ellipticity:
\begin{equation}\label{Eq:Exis-1}
a^{n}(\phi,\phi)=\|\phi\|_{0,\Omega}^2+\theta^2\tau^2\|\bbeta^{n}\cdot\nabla\phi\|_{0,\Omega}^2=: \|\phi\|_{\rm LS}^2\,,
\end{equation}
for every function regular enough so the norms above are finite. So, in particular, $a^n(\cdot,\cdot)$ is
elliptic in $V_h^{}$.
\end{Remark}

The writing of \eqref{LS-NBPM} done in the last remark makes it simpler to prove that the problem is well-posed. In fact, the following result states just that.

\begin{lemma}\label{Lem:Well-pos}
For each $n\in \{0,\ldots,N-1\}$ there exists a unique solution $\rho_h^{n+1}$ to \eqref{LS-NBPM}.
\end{lemma}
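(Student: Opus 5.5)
We prove the result by showing that the nonlinear operator defined by \eqref{LS-NBPM-bis} is strongly monotone and Lipschitz continuous on the finite-dimensional space $V_h$. We then apply the Browder--Minty theorem, or equivalently the Banach fixed-point argument for strongly monotone Lipschitz operators in a Hilbert space. Fix $n$ and regard $(\rho_h^n)^+$ as given data, so that $L^{n+1}$ is a bounded linear functional on $V_h$. Define $A:V_h\to V_h'$ by
\begin{equation*}
\langle A v_h,\phi_h\rangle := a^{n+1}(v_h^+,\phi_h) + s(v_h^-,\phi_h)\,.
\end{equation*}
Problem \eqref{LS-NBPM} is then equivalent to $A\rho_h^{n+1}=L^{n+1}$.

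The first step is strong monotonicity. For $v_h,w_h\in V_h$ we write $v_h-w_h=(v_h^+-w_h^+)+(v_h^--w_h^-)$ and expand $\langle Av_h-Aw_h,v_h-w_h\rangle$. This gives four terms: $a^{n+1}(v_h^+-w_h^+,v_h^+-w_h^+)$, then $s(v_h^--w_h^-,v_h^--w_h^-)$, and two cross terms $a^{n+1}(v_h^+-w_h^+,v_h^--w_h^-)$ and $s(v_h^--w_h^-,v_h^+-w_h^+)$. The $s$ cross term is nonnegative by \eqref{Eq:s-monotone-0}. The $a$ cross term has no sign, and this is the main obstacle. To handle it we use that $a^{n+1}$ is a fixed SPD form on the finite-dimensional space $V_h$, so by norm equivalence on $V_h$ we have $a^{n+1}(\phi,\phi)\le C_a\, s(\phi,\phi)$, with a constant $C_a$ that may depend on $h,\tau$.

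This inequality alone does not absorb the cross term for arbitrary $\alpha$, so we use the nodal structure more carefully. The operator $(\cdot)^+$ acts nodewise as the clip $t\mapsto\max\{\min\{\kappa,t\},0\}$, which is monotone and $1$-Lipschitz. Hence at each node $\bx_i$ the values $d_i^+:=(v_h^+-w_h^+)(\bx_i)$ and $d_i^-:=(v_h^--w_h^-)(\bx_i)$ satisfy $d_i^+d_i^-\ge 0$ and $|d_i^+|\le|d_i|$, where $d_i=d_i^++d_i^-$. We would first try a direct argument: bound the $a$ cross term by Cauchy--Schwarz in the $a$-norm and use these nodal relations. If this fails to close, we fall back on the variational-inequality reformulation from \cite{BGPV-2024}. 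There, testing against $\phi_h=v_h^+-w_h^+$ and $\phi_h=v_h^--w_h^-$ separately, uniqueness of $\rho_h^{n+1}$ follows from uniqueness of its positive part $u_h=(\rho_h^{n+1})^+$. This positive part solves
\begin{equation*}
a^{n+1}(u_h,v_h-u_h)\ge L^{n+1}(v_h-u_h)\qquad\text{for all } v_h\in V_h^+ ,
\end{equation*}
after a suitable rescaling by $s$. This is a variational inequality on the closed convex set $V_h^+$ with a symmetric elliptic form by \eqref{Eq:Exis-1}, so Stampacchia's theorem gives existence and uniqueness of $u_h$. We then recover $\rho_h^{n+1}=u_h+(\rho_h^{n+1})^-$ by solving $s((\rho_h^{n+1})^-,\phi_h)=L^{n+1}(\phi_h)-a^{n+1}(u_h,\phi_h)$. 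This is uniquely solvable because $s$ is diagonal and positive definite.

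The remaining steps are routine. We check the consistency condition, namely that the resulting $(\rho_h^{n+1})^-$ is compatible with $u_h$ so that $(u_h+(\rho_h^{n+1})^-)^+=u_h$. This follows nodewise from the sign conditions encoded in the variational inequality: the residual is nonpositive where $u_h(\bx_i)=0$ and nonnegative where $u_h(\bx_i)=\kappa$. We also check that $A$ is Lipschitz, since $(\cdot)^+$ is Lipschitz on $V_h$. Existence and uniqueness for each $n$ then follow by induction, starting from $\rho_h^0=i_h(\rho_0)$.
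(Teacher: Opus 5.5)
Your fallback argument is the paper's proof. The paper likewise identifies $(\rho_h^{n+1})^+$ with the solution of the variational inequality on $V_h^+$ and applies Stampacchia's theorem, but it delegates the equivalence to \cite{BGPV-2024}, whereas you spell it out: uniqueness by testing the difference of two solutions with $v_h^+-w_h^+$ and using \eqref{Eq:s-monotone-0}, and existence by recovering $(\rho_h^{n+1})^-$ nodewise through $s$ and checking the clipping (where you should also note that the residual vanishes at nodes with $0<u_h(\bx_i)<\kappa$). Discard the Browder--Minty framing, the Lipschitz check and the phrase ``after a suitable rescaling by $s$'' (the variational inequality does not involve $s$), since $A$ is in general not even monotone for small $\alpha$ (take $v_h^+-w_h^+$ and $v_h^--w_h^-$ supported at two distinct neighbouring nodes), so that route cannot be closed.
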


\begin{proof} 
We start by noticing that following arguments very similar  to those used in \cite[Section~3.2]{BGPV-2024} we deduce that
\eqref{LS-NBPM-bis} is equivalent to the following variational inequality: Find $\psi_h^{}\in V_h^+$ such
that
\begin{equation}\label{Eq:Exis-2}
a^{n+1}(\psi_h^{},\phi_h^{}-\psi_h^{})\ge L^{n+1}(\phi_h^{}-\psi_h^{})\qquad\forall\, \phi_h^{}\in V_h^{+}\,.
\end{equation}
(In fact, $\psi_h=(\rho_h^{n+1})^+$).
Since the bilinear form $a^{n+1}(\cdot,\cdot)$ is elliptic thanks to \eqref{Eq:Exis-1}, we can apply
Stampacchia's Theorem (see \cite[Chap. ~II,Theorem~2.1]{Stampacchia-2000}) and conclude that \eqref{Eq:Exis-2} is well-posed, which proves
the well-posedness of \eqref{LS-NBPM}, and the result.
\end{proof}

\section{Unconditional stability}\label{Sec:stability}

The stability for the method defined in \eqref{LS-NBPM} is the main topic of this section. The following result states that, for $\theta\in [1/2,1]$, \eqref{LS-NBPM} is unconditionally stable. 

\begin{theorem}\label{Theo:Stability}
Let $N\in \mathbb{N}$. Then, for every $n\in \{1,\ldots,N\}$ the following stability bound holds for the solution of \eqref{LS-NBPM}:
\begin{equation}\label{Eq:stab}
\left\|(\rho_h^n)^+\right\|^2_{0,\Omega}+ \left\|\tau(1-\theta)\bbeta^n\cdot\nabla (\rho_h^n)^+\right\|^2_{0,\Omega}
+ (2\theta-1)\tau^2\sum_{m=1}^n \left\|\bbeta^m\cdot\nabla (\rho_h^m)^+\right\|^2_{0,\Omega}
\le \left\|\rho_h^0\right\|^2_{0,\Omega}+\left \|\tau(1-\theta)\bbeta^0\cdot \nabla \rho_h^0\right\|^2_{0,\Omega}\,,
\end{equation}
for any time-step size $\tau = T/N$.
\end{theorem}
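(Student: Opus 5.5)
The plan is to test \eqref{LS-NBPM} with the specific choice $\phi_h^{}=(\rho_h^{m})^+$, for $m=n+1$. The stabilising term then reads $s((\rho_h^{m})^-,(\rho_h^{m})^+)$, which is nonnegative by \eqref{Eq:s-monotone} and can simply be dropped. The remaining inequality becomes a pure energy argument for the least-squares $\theta$-scheme applied to the compliant part, in the spirit of \cite{Pacheco2024}.

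To organise the algebra, write $w^m:=(\rho_h^m)^+$, $g^m:=\tau\bbeta^m\cdot\nabla w^m$ and $\delta:=w^{m}-w^{m-1}$. Since $\dive\bbeta=0$ and $\bbeta\cdot\bn=0$, integration by parts gives $\langle \bbeta^m\cdot\nabla v,v\rangle_\Omega=0$ for $v\in V_h$. In particular $\langle g^m,w^m\rangle_\Omega=0$. Testing with $w^m$ then gives
\begin{equation*}
\langle \delta+\theta g^m+(1-\theta)g^{m-1},\, w^m+\theta g^m\rangle_\Omega\le 0.
\end{equation*}

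I will expand this by writing $w^m=\tfrac12(w^m+w^{m-1})+\tfrac12\delta$. The main contributions are then as follows:
\begin{itemize}
\item $\langle\delta,w^m\rangle_\Omega=\tfrac12(\|w^m\|^2-\|w^{m-1}\|^2+\|\delta\|^2)$;
\item $\theta^2\|g^m\|^2$;
\item $(1-\theta)\langle g^{m-1},w^m\rangle_\Omega=-(1-\theta)\langle g^{m-1},\delta\rangle_\Omega$, using skew-symmetry of $\bbeta^{m-1}\cdot\nabla$ with $w^{m-1}$;
\item $\theta\langle\delta,g^m\rangle_\Omega$;
\item $\theta(1-\theta)\langle g^{m-1},g^m\rangle_\Omega$.
\end{itemize}
The target is the telescoping quantity $\|w^m\|^2+(1-\theta)^2\|g^m\|^2$, plus a dissipation $(2\theta-1)\|g^m\|^2$ summed in time. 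I would use $\theta^2=(1-\theta)^2+(2\theta-1)$ together with the identity
\begin{equation*}
2\theta(1-\theta)\langle g^{m-1},g^m\rangle_\Omega=\theta(1-\theta)\big(\|g^m\|^2+\|g^{m-1}\|^2-\|g^m-g^{m-1}\|^2\big).
\end{equation*}
The cross terms in $\delta$ should then be gathered with $\|\delta\|^2$ and $\|g^m-g^{m-1}\|^2$ into a perfect square of the form $\|\delta+c_1 g^m+c_2 g^{m-1}\|^2\ge0$ with suitable coefficients. After summing over $m=1,\dots,n$ the terms telescope, which gives \eqref{Eq:stab}. Finally I note that $w^0=(\rho_h^0)^+=\rho_h^0$, because $\rho_h^0=i_h\rho_0$ has nodal values in $[0,\kappa]$.

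The main obstacle is this algebraic completion of squares. The coefficient $\|g^{m-1}\|^2$ that appears must be exactly the one that telescopes against $(1-\theta)^2\|g^m\|^2$, and the leftover $\|g^m\|^2$ coefficient must be at least $2\theta-1$. If the direct matching fails, my first fallback is to add and subtract $(1-\theta)^2\|g^{m-1}\|^2$ and to use Young's inequality on $\langle\delta,g^m-g^{m-1}\rangle_\Omega$, absorbing it into $\|\delta\|^2$. This works because the coefficient multiplying $\langle\delta,\cdot\rangle_\Omega$ is of the form $\theta g^m-(1-\theta)g^{m-1}$, and that quantity can be rewritten as $(1-\theta)(g^m-g^{m-1})+(2\theta-1)g^m$. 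The $(2\theta-1)$ piece pairs with $\delta$; there I would use skew-symmetry once more, since $\langle\delta,g^m\rangle_\Omega=-\langle w^{m-1},g^m\rangle_\Omega$, and control it through the square. Since only identities and \eqref{Eq:s-monotone} are used, the bound holds for every $\tau$, with no CFL condition.
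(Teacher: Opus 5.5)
Your outline matches the paper's proof: test with $w^m=(\rho_h^m)^+$, drop the stabilisation term by \eqref{Eq:s-monotone}, use skew-symmetry, complete one square, split $\theta^2=(1-\theta)^2+(2\theta-1)$, telescope, and use $(\rho_h^0)^+=\rho_h^0$. However, the step you flag as the main obstacle fails as you have set it up, because of a sign error in your third bullet. Since $\langle g^{m-1},w^{m-1}\rangle_\Omega=0$, one has
\[
\langle g^{m-1},w^m\rangle_\Omega=\langle g^{m-1},w^{m-1}+\delta\rangle_\Omega=+\langle g^{m-1},\delta\rangle_\Omega,
\]
not $-\langle g^{m-1},\delta\rangle_\Omega$. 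With your sign, the coefficient of $2\langle\delta,\cdot\rangle_\Omega$ is $\theta g^m-(1-\theta)g^{m-1}$, which forces $c_1=\theta$ and $c_2=-(1-\theta)$. Then twice your tested expression, minus the target $\|w^m\|^2-\|w^{m-1}\|^2+\theta^2\|g^m\|^2-(1-\theta)^2\|g^{m-1}\|^2$, equals
\[
\|\delta+\theta g^m-(1-\theta)g^{m-1}\|^2+4\theta(1-\theta)\langle g^m,g^{m-1}\rangle_\Omega .
\]
This has no sign for $\theta\in[\tfrac12,1)$, in particular for Crank--Nicolson. As a quadratic form in $(\delta,g^m,g^{m-1})$ it is indefinite, so your Young's-inequality fallback cannot close it either. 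Your polarisation identity with $-\|g^m-g^{m-1}\|^2$ also points the wrong way: it produces $+\theta(1-\theta)\|g^{m-1}\|^2$, whereas telescoping needs a negative multiple of $\|g^{m-1}\|^2$.

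Once the sign is corrected, no fallback is needed. The $\delta$-coefficient becomes $\theta g^m+(1-\theta)g^{m-1}$. For the $g$-terms, use $2\langle a+b,a\rangle_\Omega=\|a\|^2-\|b\|^2+\|a+b\|^2$ with $a=\theta g^m$ and $b=(1-\theta)g^{m-1}$, i.e.
\[
2\theta^2\|g^m\|^2+2\theta(1-\theta)\langle g^{m-1},g^m\rangle_\Omega=\theta^2\|g^m\|^2-(1-\theta)^2\|g^{m-1}\|^2+\|\theta g^m+(1-\theta)g^{m-1}\|^2 .
\]
Combined with $\|\delta\|^2+2\langle\delta,\theta g^m+(1-\theta)g^{m-1}\rangle_\Omega$, this yields the single square $\|\delta+\theta g^m+(1-\theta)g^{m-1}\|^2\ge 0$, which is exactly \eqref{Eq:stab-6} in the paper. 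Twice the tested expression is therefore
\[
\|w^m\|^2-\|w^{m-1}\|^2+\theta^2\|g^m\|^2-(1-\theta)^2\|g^{m-1}\|^2+\|\delta+\theta g^m+(1-\theta)g^{m-1}\|^2\le 0 .
\]
Splitting $\theta^2=(1-\theta)^2+(2\theta-1)$ and summing over $m=1,\dots,n$ gives \eqref{Eq:stab}.
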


\begin{proof}
Let $n\in \{1,\ldots,N\}$ and $m\in \{1,\ldots,n\}$.
Then, taking $\phi_h^{}=2(\rho_h^{m})^+$ as test function in \eqref{LS-NBPM} we obtain:
\begin{align}
0 =&\, 2\left\langle (\rho_h^{m})^+-(\rho_h^{m-1})^+, (\rho_h^{m})^+\right\rangle_\Omega^{}
+2\left\langle\tau\theta\, \bbeta^m\cdot\nabla (\rho_h^{m})^+ +\tau(1-\theta)\bbeta^{m-1}\cdot\nabla (\rho_h^{m-1})^+,
(\rho_h^{m})^+\right\rangle_\Omega^{} \nonumber\\
&\,+ 2\left\langle(\rho_h^{m})^+-(\rho_h^{m-1})^+,  \tau\theta \bbeta^{m}\cdot\nabla(\rho_h^m)^+ \right\rangle_\Omega^{}
\nonumber\\
&\, + 2\left\langle \tau\theta\, \bbeta^m\cdot\nabla (\rho_h^{m})^+ +\tau(1-\theta)\bbeta^{m-1}\cdot\nabla (\rho_h^{m-1})^+,
\tau\theta \bbeta^{m}\cdot\nabla(\rho_h^m)^+ \right\rangle_\Omega^{} + 2s((\rho_h^m)^-,(\rho_h^m)^+) \nonumber\\
=:&\,  \circled{1} + \circled{2}+\ldots+\circled{5}\,. \label{Eq:Stab-1}
\end{align}

We deal with the terms above  one by one.  First, using \eqref{Eq:s-monotone} it follows
immediately that $\circled{5}\ge 0$. Next, the identity $2(a-b)a=a^2-b^2+(a-b)^2$ yields
\begin{equation}\label{Eq:Stab-2}
\circled{1}=\|(\rho_h^{m})^+\|^2_{0,\Omega}-\|(\rho_h^{m-1})^+\|^2_{0,\Omega} +\|(\rho_h^{m})^+-(\rho_h^{m-1})^+\|^2_{0,\Omega}\,.
\end{equation}
Moreover, using that $\bbeta$ is solenoidal and has vanishing normal component on $\partial\Omega$ we see that the convective bilinear form is skew-symmetric, and thus
\begin{equation}
\circled{2}= 2\left\langle\tau(1-\theta)\bbeta^{m-1}\cdot\nabla(\rho_h^{m-1})^+, (\rho_h^{m})^+-(\rho_h^{m-1})^+\right\rangle_\Omega^{}\,.\label{Eq:stab-3}
\end{equation}

Once again, applying $2(a+b)a=a^2-b^2+(a+b)^2$ we arrive at
\begin{align}
\circled{4}=&\, \|\tau\theta\bbeta^m\cdot\nabla (\rho_h^m)^+\|^2_{0,\Omega}-
\|\tau(1-\theta)\bbeta^{m-1}\cdot\nabla (\rho_h^{m-1})^+\|^2_{0,\Omega}  \nonumber\\
&\,+ \| \tau\theta\bbeta^m\cdot\nabla (\rho_h^m)^+ + \tau(1-\theta)\bbeta^{m-1}\cdot\nabla (\rho_h^{m-1})^+\|^2_{0,\Omega}\,.\label{Eq:stab-4}
\end{align}

Replacing the rewritings of \circled{1},...,\circled{5}  into \eqref{Eq:Stab-1} and re-arranging terms gives us
\begin{align}
&\, \|(\rho_h^{m})^+\|^2_{0,\Omega}-\|(\rho_h^{m-1})^+\|^2_{0,\Omega} +\|(\rho_h^{m})^+-(\rho_h^{m-1})^+\|^2_{0,\Omega} \nonumber\\
&\, + 2\left\langle \tau(1-\theta)\bbeta^{m-1}\cdot\nabla (\rho_h^{m-1})^+ + \tau\theta \bbeta^{m}\cdot\nabla (\rho_h^m)^+,
(\rho_h^m)^+ - (\rho_h^{m-1})^+\right\rangle_\Omega^{}\nonumber\\
&\, + \|\tau\theta\bbeta^m\cdot\nabla (\rho_h^m)^+\|^2_{0,\Omega}-
\|\tau(1-\theta)\bbeta^{m-1}\cdot\nabla (\rho_h^{m-1})^+\|^2_{0,\Omega} \nonumber\\
&\, + \| \tau\theta\bbeta^m\cdot\nabla (\rho_h^m)^+ + \tau(1-\theta)\bbeta^{m-1}\cdot\nabla (\rho_h^{m-1})^+\|^2_{0,\Omega}\nonumber\\
=&\, \circled{i}+\ldots+\circled{vii}\le 0\,.\label{Eq:stab-5}
\end{align}

We once again rewrite and bound the terms above one by one.  First, we see that
\begin{equation}\label{Eq:stab-6}
\circled{iii}+\circled{iv}+\circled{vii} =
\|(\rho_h^{m})^+-(\rho_h^{m-1})^+ +  \tau\theta\bbeta^m\cdot\nabla (\rho_h^m)^+ + \tau(1-\theta)\bbeta^{m-1}\cdot\nabla (\rho_h^{m-1})^+\|^2_{0,\Omega}\ge 0\,.
\end{equation}
Moreover,
\begin{equation}\label{Eq:stab-7}
\circled{v}= \theta^2\|\tau\bbeta^m\cdot\nabla (\rho_h^m)^+\|^2_{0,\Omega}
= (1-\theta)^2\|\tau\bbeta^m\cdot\nabla (\rho_h^m)^+\|^2_{0,\Omega}  - (1-2\theta)\|\tau\bbeta^m\cdot\nabla (\rho_h^m)^+\|^2_{0,\Omega}\,.
\end{equation}
Hence, \eqref{Eq:stab-5} implies
\begin{align}
&\, \|(\rho_h^{m})^+\|^2_{0,\Omega}-\|(\rho_h^{m-1})^+\|^2_{0,\Omega} +
\|\tau(1-\theta)\bbeta^m\cdot\nabla (\rho_h^m)^+\|^2_{0,\Omega}-
\|\tau(1-\theta)\bbeta^{m-1}\cdot\nabla (\rho_h^{m-1})^+\|^2_{0,\Omega} \nonumber\\
&\, -(1-2\theta)\, \|\tau\bbeta^m\cdot\nabla (\rho_h^m)^+\|^2_{0,\Omega}\le 0\,. \label{Eq:stab-8}
\end{align}
The proof then follows adding from $m=1$ to $m=n$, which gives us stability for $\theta \in [0.5,1]$. 
\end{proof}

\section{Error analysis}\label{Sec:error}
In this section, we derive a priori error estimates for \eqref{LS-NBPM}. We herein focus on the fully-discrete Crank--Nicolson scheme, for two reasons. First, the same arguments also apply to the full range of stable $\theta$-schemes. In addition, this is the first time that the analysis of a higher-order time discretisation has been carried out for the nodally bound-preserving finite element method. To avoid unnecessary technicalities, the convergence analysis will assume that the convective velocity $\boldsymbol{u}$ does not depend on time.

We start by stating two preliminary inequalities. Their proofs are classical, but we state them here in a condensed way for completeness.
\begin{lemma}\label{lem:taylor}
Let $t^{n+1/2}:=(t^n+t^{n+1})/2$. If $g\in H^2(t^n,t^{n+1};L^2(\Omega))$, then
\begin{align*}
\left\|\frac{g(t^n)+g(t^{n+1})}{2}-g(t^{n+1/2})\right\|_{0,\Omega}
\le \Big(\frac{\tau^3}{48}\Big)^{1/2}\left\|\partial_t^2 g\right\|_{L^2(t^n,t^{n+1};L^2(\Omega))},
\end{align*}
and if $g\in H^3(t^n,t^{n+1};L^2(\Omega))$, then
\begin{align*}
\left\|\frac{g(t^{n+1})-g(t^n)}{\tau}-\partial_t g(t^{n+1/2})\right\|_{0,\Omega}
\le \Big(\frac{\tau^3}{320}\Big)^{1/2}\left\|\partial_t^3 g\right\|_{L^2(t^n,t^{n+1};L^2(\Omega))}.
\end{align*}
\end{lemma}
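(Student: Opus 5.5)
Both estimates follow the same route. We write the error as an integral of the appropriate time derivative against a Peano kernel, then apply the Cauchy--Schwarz inequality in time, and finally take the $L^2(\Omega)$ norm using Minkowski's integral inequality. We shift time so that $t^{n+1/2}=0$ and the interval becomes $[-\tau/2,\tau/2]$. Since $g\in H^2$ (resp.\ $H^3$) in time with values in $L^2(\Omega)$, Taylor's formula with integral remainder holds in the Bochner sense:
\begin{equation*}
g(s)=\sum_{j<k}\frac{s^j}{j!}\partial_t^j g(0)+\int_0^s\frac{(s-r)^{k-1}}{(k-1)!}\,\partial_t^k g(r)\,dr .
\end{equation*}

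For the first inequality we take $k=2$ at $s=\pm\tau/2$ and average. The constant and linear terms cancel, leaving
\begin{equation*}
\frac{g(\tau/2)+g(-\tau/2)}{2}-g(0)=\int_{-\tau/2}^{\tau/2}K_2(r)\,\partial_t^2g(r)\,dr,
\end{equation*}
where the kernel is $K_2(r)=\tfrac12(\tau/2-|r|)$. Minkowski's inequality followed by Cauchy--Schwarz bounds the norm of the left-hand side by $\|K_2\|_{L^2(-\tau/2,\tau/2)}\,\|\partial_t^2 g\|_{L^2(L^2)}$. A direct computation gives
\begin{equation*}
\|K_2\|_{L^2}^2=2\cdot\tfrac14\int_0^{\tau/2}(\tau/2-r)^2\,dr=\tfrac12\cdot\tfrac13\,(\tau/2)^3=\tau^3/48,
\end{equation*}
which is exactly the stated constant.

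For the second inequality we take $k=3$ and form $(g(\tau/2)-g(-\tau/2))/\tau-\partial_tg(0)$. The even-order terms cancel in the difference, and the linear term cancels against $\partial_tg(0)$. The remainder is
\begin{equation*}
\int_{-\tau/2}^{\tau/2}K_3(r)\,\partial_t^3g(r)\,dr,\qquad K_3(r)=\frac{(\tau/2-|r|)^2}{2\tau}.
\end{equation*}
To see this, note that for $r>0$ the kernel contribution is $(\tau/2-r)^2/(2\tau)$ from $g(\tau/2)$. For $r<0$, writing $\int_0^{-\tau/2}=-\int_{-\tau/2}^0$ with the sign from the subtraction, the contribution is $(-\tau/2-r)^2/(2\tau)=(\tau/2+r)^2/(2\tau)$. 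The same Minkowski/Cauchy--Schwarz step then gives the bound $\|K_3\|_{L^2}\|\partial_t^3 g\|_{L^2(L^2)}$, with
\begin{equation*}
\|K_3\|_{L^2}^2=2\cdot\frac{1}{4\tau^2}\int_0^{\tau/2}(\tau/2-r)^4\,dr=\frac{1}{2\tau^2}\cdot\frac{(\tau/2)^5}{5}=\frac{\tau^3}{320},
\end{equation*}
which matches the statement.

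The main obstacle is bookkeeping rather than analysis. One must get the Peano kernels right, in particular the sign handling on $[-\tau/2,0]$, so that the constants come out exactly as $1/48$ and $1/320$. One must also justify Taylor's formula for $L^2(\Omega)$-valued Sobolev functions. For the latter we argue by density of smooth functions in $H^k(t^n,t^{n+1};L^2(\Omega))$: both sides of each inequality are continuous in that norm, because point evaluations of $g$ and $\partial_t g$ are continuous by the Sobolev embedding into $C^1$.
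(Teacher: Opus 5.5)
Your proposal is correct and follows the paper's proof. In shifted time your kernels $K_2(r)=\tfrac12(\tau/2-|r|)$ and $K_3(r)=(\tau/2-|r|)^2/(2\tau)$ are exactly the paper's $\tfrac12 w$ and $w^2/(2\tau)$, and the Cauchy--Schwarz step with $\|K_2\|_{L^2}^2=\tau^3/48$ and $\|K_3\|_{L^2}^2=\tau^3/320$ is the same computation. Your remarks on the Bochner-valued Taylor formula and on Minkowski's inequality only make explicit what the paper leaves implicit.
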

\begin{proof}
Let $w(s):=\min\{s-t^n,\,t^{n+1}-s\}$. Taylor expansion about $t^{n+1/2}$ with
integral remainder (the odd-order terms cancel by symmetry) gives
\begin{align*}
\frac12\big(g(t^n)+g(t^{n+1})\big)-g(t^{n+1/2})=&\,\frac12\int_{t^n}^{t^{n+1}} w(s)\,\partial_t^2 g(s)\,\mathrm{d}s,
\\
\frac{g(t^{n+1})-g(t^n)}{\tau}-\partial_t g(t^{n+1/2})=&\,\frac1\tau\int_{t^n}^{t^{n+1}} \frac{w(s)^2}{2}\,\partial_t^3 g(s)\,\mathrm{d}s .
\end{align*}
The result follows from the Cauchy--Schwarz inequality, using
$\int_{t^n}^{t^{n+1}} w^2 = \tau^3/12$ and $\int_{t^n}^{t^{n+1}} w^4/4 = \tau^5/320$.
\end{proof}

We are now in position of proving our main error estimate.

\begin{theorem}\label{Theo:Convergence}
	Provided that the exact solution $\rho$ of the transport equation \eqref{Transport:Eq} is such that
	\begin{align*}
	\partial_t\rho \in L^2(0,T;H^{k+1}(\Omega))\, , \quad \partial^3_t\rho \in L^2(Q)\, , %
    \quad\text{and} \ \ \sum_{n=0}^N\tau\left|\rho^n\right|_{k+1,\,\Omega} < \infty\, ,
	\end{align*}
the following error bound holds for the solution of \eqref{LS-NBPM} with $\theta=1/2$:
\begin{align}
		\Lsnorm{\rho^N - \left(\rho_h^N\right)^+  }
	\leq  C_1 \tau^2 +  C_2 h^k + C_3 h^{k+1}\, ,
\end{align}
where
\begin{alignat*}{2}
    C_1 &= C \sqrt{T} \Norm{\partial_t^3\rho}_{0,\,Q} \ ,
	\quad && C_2 = C u_{\infty}\sum_{n=0}^N\tau\left|\rho^n\right|_{k+1,\,\Omega} ,\\
	C_3 &= C\sqrt{T}\Norm{\partial_t\rho}_{L^2\left(0,T;H^{k+1}(\Omega)\right)}+C\left|\rho^N\right|_{k+1,\,\Omega} \ ,
    \quad && u_{\infty} = \|\boldsymbol{u}\|_{\infty,\,\Omega}\,,
\end{alignat*}
and $C$ denotes dimensionless constants.

\end{theorem}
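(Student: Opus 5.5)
We split the error using the Lagrange interpolant as
\[
\rho^n-(\rho_h^n)^+=\big(\rho^n-i_h\rho^n\big)+\big(i_h\rho^n-(\rho_h^n)^+\big)=:\eta^n+e_h^n .
\]
The key observation is that $0\le\rho^n\le\kappa$ by \eqref{Bounds-rho}, so $i_h\rho^n\in V_h^+$. This means $i_h\rho^n$ is an admissible test function in the variational inequality \eqref{Eq:Exis-2}. The interpolation part is handled by \eqref{lagranges}:
\[
\|\eta^N\|_{\rm LS}\le \|\eta^N\|_{0,\Omega}+\tfrac{\tau}{2}u_\infty|\eta^N|_{1,\Omega}\le C h^{k+1}|\rho^N|_{k+1,\Omega}+C\tau u_\infty h^k|\rho^N|_{k+1,\Omega}.
\]
The last term is absorbed into $C_2$, since $\tau|\rho^N|_{k+1,\Omega}$ is one summand of $\sum_n\tau|\rho^n|_{k+1,\Omega}$. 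The remaining work is to estimate $e_h^N$.

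For the discrete error we work with the variational inequality rather than the equality. Taking $\phi_h=i_h\rho^{n+1}$ in \eqref{Eq:Exis-2} with $\psi_h=(\rho_h^{n+1})^+$ gives
\[
a(e_h^{n+1},e_h^{n+1})\le a(i_h\rho^{n+1},e_h^{n+1})-L^{n+1}(e_h^{n+1}).
\]
Equivalently, we test \eqref{LS-NBPM} with $e_h^{n+1}$ and drop $s((\rho_h^{n+1})^-,\,\cdot\,)$ using the monotonicity \eqref{Eq:s-monotone-0}. This yields a "Galerkin-orthogonality inequality" in which the consistency residual appears. For this residual we evaluate the exact equation at $t^{n+1/2}$:
\[
\rho^{n+1}-\rho^n+\tfrac{\tau}{2}\bu\cdot\nabla(\rho^{n+1}+\rho^n)=\tau R^{n},
\]
where $R^n$ collects the time-difference defect and the averaging defect. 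Lemma~\ref{lem:taylor}, applied to $g=\rho$ for the first part and to $g=\bu\cdot\nabla\rho$ (rewritten as $-\partial_t\rho$, hence needing only $\partial_t^3\rho$) for the second, gives $\|R^n\|_{0,\Omega}\le C\tau^{3/2}\|\partial_t^3\rho\|_{L^2(t^n,t^{n+1};L^2(\Omega))}$. We then get an error equation of exactly the form of the stability analysis, with source terms on the right:
\[
\big\langle e_h^{n+1}-e_h^n+\tfrac\tau2\bu\cdot\nabla(e_h^{n+1}+e_h^n),\,e_h^{n+1}+\tfrac\tau2\bu\cdot\nabla e_h^{n+1}\big\rangle_\Omega\le \big\langle \tau R^n-(\eta^{n+1}-\eta^n)-\tfrac\tau2\bu\cdot\nabla(\eta^{n+1}+\eta^n),\,e_h^{n+1}+\tfrac\tau2\bu\cdot\nabla e_h^{n+1}\big\rangle_\Omega .
\]

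The left-hand side is treated exactly as in the proof of Theorem~\ref{Theo:Stability} with $\theta=1/2$. It telescopes into $\tfrac12\big(\|e_h^{n+1}\|_{\rm LS}^2-\|e_h^n\|_{\rm LS}^2\big)$, using that for $\theta=1/2$ the stability energy $\|\cdot\|_{0,\Omega}^2+\|\tfrac\tau2\bu\cdot\nabla\cdot\|_{0,\Omega}^2$ coincides with $\|\cdot\|_{\rm LS}^2$, plus the nonnegative square \eqref{Eq:stab-6}. I intend to keep that square, $\|D^n\|^2$ with $D^n$ the discrete residual, because it absorbs part of the right-hand side.

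The main obstacle is the right-hand side, in particular avoiding a Gronwall argument or a CFL-type condition. My first attempt is to bound it by Cauchy--Schwarz as
\[
\text{RHS}\le \|F^n\|_{0,\Omega}\,\|e_h^{n+1}\|_{\rm LS},\qquad F^n:=\tau R^n-(\eta^{n+1}-\eta^n)-\tfrac\tau2\bu\cdot\nabla(\eta^{n+1}+\eta^n),
\]
and then apply the standard trick for $\tfrac12(a_{n+1}^2-a_n^2)\le f_n a_{n+1}$, which gives $a_{n+1}\le a_n+2f_n$ after dividing by $a_{n+1}+a_n$. Summing, $\|e_h^N\|_{\rm LS}\le \|e_h^0\|_{\rm LS}+2\sum_n\|F^n\|_{0,\Omega}$, and $e_h^0=0$ because $\rho_h^0=i_h\rho_0$. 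The three pieces of $F^n$ are then bounded as follows:
\begin{itemize}
\item $\sum_n\tau\|R^n\|_{0,\Omega}\le C\tau^2\sqrt T\,\|\partial_t^3\rho\|_{0,Q}$ by Cauchy--Schwarz in $n$, which gives $C_1\tau^2$.
\item $\|\eta^{n+1}-\eta^n\|_{0,\Omega}\le Ch^{k+1}\int_{t^n}^{t^{n+1}}|\partial_t\rho|_{k+1,\Omega}$, whose sum is bounded by $Ch^{k+1}\sqrt T\,\|\partial_t\rho\|_{L^2(0,T;H^{k+1}(\Omega))}$, which is the $C_3$ term.
\item $\tfrac\tau2\|\bu\cdot\nabla(\eta^{n+1}+\eta^n)\|_{0,\Omega}\le C u_\infty\tau h^k\big(|\rho^{n+1}|_{k+1,\Omega}+|\rho^n|_{k+1,\Omega}\big)$, whose sum gives the $C_2$ term.
\end{itemize}
If the weaker pairing with only the $L^2$ part of $e_h^{n+1}$ turns out to be needed, I would fall back to splitting the test function and absorbing into the retained square $\|D^n\|^2$, but the direct Cauchy--Schwarz against the LS norm appears sufficient.
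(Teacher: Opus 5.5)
Your proposal is correct and follows essentially the paper's proof: the same splitting, dropping the stabilisation term via \eqref{Eq:s-monotone-0} using $(i_h\rho^{n+1})^-=0$, the one-step recursion $\Lsnorm{e_h^{n+1}}\le\Lsnorm{e_h^n}+c\Ltwo{F^n}$ summed with $e_h^0=0$ and no Gronwall argument or CFL condition, and the same four consistency and interpolation bounds. The only difference is cosmetic. The paper writes the left-hand side as $\Ltwo{A}^2-\langle B,A\rangle_\Omega$ with $A=e_h^{n+1}+\tfrac{\tau}{2}\bu\cdot\nabla e_h^{n+1}$ and $B=e_h^n-\tfrac{\tau}{2}\bu\cdot\nabla e_h^n$, so that $\Ltwo{A}=\Lsnorm{e_h^{n+1}}$ and $\Ltwo{B}=\Lsnorm{e_h^n}$, and applies Cauchy--Schwarz directly to get $c=1$, whereas your route through the telescoped energy identity gives $c=2$ (there the retained square is $\tfrac12\Ltwo{D^n}^2$ rather than $\Ltwo{D^n}^2$, which is harmless since you discard it).
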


\begin{proof}
We start by recalling that $\rho^n$ denotes the solution $\rho$ evaluated at $t=t^n = n\tau$, while $\rho^{n+\frac12} := \rho(t^{n+\frac12})$, $t^{n+\frac12} := (n+\nicefrac{1}{2})\tau$ . Let us then split the error into discrete and interpolation parts:
\begin{align}
	\left(\rho^n_h\right)^+ - \rho^n  =	\underbrace{\left(\rho^n_h\right)^+ - \ih\rho^n}_{=:\, e_h^n} + \underbrace{\ih\rho^n - \rho^n}_{=:\, \eta^n}\, .
\end{align}

Using that $\rho(t,\bx)\in [0,\kappa]$  in $Q$, then $(i_h\rho^{n+1})^+=i_h\rho^{n+1}$ and $(i_h\rho^{n+1})^-=0$. As a consequence, the numerical and exact solutions satisfy, respectively, the following equations:
\begin{align}
	\inner{
		\left(\rho_h^{n+1}\right)^+ - \left(\rho_h^n\right)^+  +\tau\boldsymbol{u} \cdot \nabla \left(\frac{\left(\rho_h^{n+1}\right)^+ + \left(\rho_h^n\right)^+}{2} \right)
	}{
		\phi_h + \frac{\tau}{2} \boldsymbol{u} \cdot \nabla\phi_h
	} + s\left(\left(\rho_h^{n+1}\right)^-,\phi_h\right)&= 0\, \label{errorEq1}\\
\inner{\tau \partial_t\rho^{n+\frac{1}{2}}
	+ \tau\boldsymbol{u} \cdot \nabla\rho^{n+\frac{1}{2}}
}{
	\phi_h + \frac{\tau}{2} \boldsymbol{u} \cdot \nabla\phi_h} + s\left(\left(\ih\rho^{n+1}\right)^-,\phi_h\right) &= 0\, . \label{errorEq2}
\end{align}
Subtracting \eqref{errorEq2} from \eqref{errorEq1} yields
\begin{align}
	&\inner{
	\left(\rho_h^{n+1}\right)^+ - \left(\rho_h^n\right)^+ - \tau \partial_t\rho^{n+\frac{1}{2}}
		+ \tau\boldsymbol{u} \cdot \nabla \left(\frac{\left(\rho_h^{n+1}\right)^+ + \left(\rho_h^n\right)^+}{2} - \rho^{n+\frac{1}{2}}\right)
	}{
	\phi_h + \frac{\tau}{2} \boldsymbol{u} \cdot \nabla\phi_h
	} \label{error1} \\
&= s\left(\left(\ih\rho^{n+1}\right)^- - \left(\rho_h^{n+1}\right)^-,\phi_h\right)  .\nonumber
\end{align}
By setting $\phi_h = e_h^{n+1} = (\rho_h^{n+1})^+ - \ih{\rho^{n+1}}$ in \eqref{error1}, we start noticing that, thanks to \eqref{Eq:s-monotone-0}, the resulting stabilisation term satisfies
\begin{equation*}
	s\left(\left[\ih\rho^{n+1}\right]^- - \left(\rho_h^{n+1}\right)^-,(\rho_h^{n+1})^+ - \ih{\rho^{n+1}}\right) =    -\overbrace{s\left( \left(\rho_h^{n+1}\right)^- - \left(\ih\rho^{n+1}\right)^- ,\left(\rho_h^{n+1}\right)^+ - \left(\ih\rho^{n+1}\right)^+\right)}^{\geq 0 \ \text{(monotonicity of $s$)}}\leq 0\,.
\end{equation*}
 Thus, \eqref{error1} leads to the following error inequality
\begin{align}
	\inner{
	e_h^{n+1} - e_h^n
		+ \tau\boldsymbol{u} \cdot \nabla \left(\frac{e_h^{n+1} + e_h^n}{2} \right)
	}{
		e_h^{n+1} + \frac{\tau}{2} \boldsymbol{u} \cdot \nabla e_h^{n+1}
	} \leq 	\inner{\text{RHS}
}{
e_h^{n+1} + \frac{\tau}{2} \boldsymbol{u} \cdot \nabla e_h^{n+1}
}
\, ,\label{error2}
\end{align}
where 
\begin{align*}
\text{RHS} := 	\tau\partial_t\rho^{n+\frac12} - \ih{(\delta\rho^{n+1})}
	+ \tau\boldsymbol{u} \cdot \nabla \left[\rho^{n+\frac12} - \ih{\left(\frac{\rho^{n+1} + \rho^n}{2}\right)}\right]\,,
\end{align*}
and $\delta\rho^{n+1}:= \rho^{n+1}-\rho^n$.

The left-hand side of \eqref{error2} is equal to
\begin{align*}
\Ltwo{e_h^{n+1} + \frac{\tau}{2} \boldsymbol{u} \cdot \nabla e_h^{n+1}}^2 - 	\inner{
	e_h^n
	- \frac{\tau}{2}\boldsymbol{u} \cdot \nabla e_h^n
}{
	e_h^{n+1} + \frac{\tau}{2} \boldsymbol{u}\cdot \nabla e_h^{n+1}
}\, ,
\end{align*}
so that the Cauchy--Schwarz inequality gives us
\begin{align*}
	\Ltwo{e_h^{n+1} + \frac{\tau}{2} \boldsymbol{u} \cdot \nabla e_h^{n+1}}^2 \leq	\inner{
		e_h^n
		- \frac{\tau}{2}\boldsymbol{u} \cdot \nabla e_h^n + \text{RHS}
	}{
		e_h^{n+1} + \frac{\tau}{2}\boldsymbol{u} \cdot \nabla e_h^{n+1}
	}\\
\leq
\left(\Ltwo{e_h^n
- \frac{\tau}{2}\boldsymbol{u} \cdot \nabla e_h^n} + \Ltwo{\text{RHS}}\right)
\Ltwo{e_h^{n+1} + \frac{\tau}{2}\boldsymbol{u} \cdot \nabla e_h^{n+1}}\, ,
\end{align*}
from which we conclude
\begin{align}
	\Ltwo{e_h^{n+1} + \frac{\tau}{2}\boldsymbol{u} \cdot \nabla e_h^{n+1}}
	\leq
	\Ltwo{e_h^n
		- \frac{\tau}{2}\boldsymbol{u} \cdot \nabla e_h^n} + \Ltwo{\text{RHS}}\, .\label{error3}
\end{align}
Due to skew-symmetry, we have
\begin{equation*}
	\Ltwo{e_h^{n} \pm \frac{\tau}{2} \boldsymbol{u} \cdot \nabla e_h^{n}}^2
	= \Ltwo{e_h^{n}}^2 + \left(\frac{\pm \tau}{2}\right)^2\!  \Ltwo{\boldsymbol{u} \cdot \nabla e_h^{n}}^2
	\pm \tau \overbrace{\inner{\boldsymbol{u}\cdot \nabla e_h^{n}}{e_h^{n}}}^{=\, 0} 
	= \Norm{e_h^{n}}_{\text{LS}}^2\, ,
\end{equation*}
so that \eqref{error3} becomes
\begin{align*}
 \Lsnorm{e_h^{n+1}} \leq \Lsnorm{e_h^n} + \Ltwo{\text{RHS}}\, .
\end{align*}

The right-hand side term can be split into multiple contributions:
\begin{align*}
	\Ltwo{\text{RHS}} =&\,  \Ltwo{\tau\partial_t\rho^{n+\frac12} - \ih{(\delta\rho^{n+1})}
	+ \tau\boldsymbol{u} \cdot \nabla \left[\rho^{n+\frac12} - \ih{\left(\frac{\rho^{n+1} + \rho^n}{2}\right)}\right]}\\
 \leq&\, \underbrace{\tau \Ltwo{\partial_t\rho^{n+\frac12} - \frac{\delta\rho^{n+1}}{\tau}}}_{\circled{1}}
+ \underbrace{\tau \Ltwo{\boldsymbol{u}\cdot \nabla \left[\rho^{n+\frac{1}{2}} - \left(\frac{\rho^{n+1} + \rho^n}{2}\right)\right]}}_{\circled{2}} \\
&+ \underbrace{\frac{\tau}{2} \Ltwo{\boldsymbol{u} \cdot \nabla(\eta^{n+1}+\eta^{n})}}_{\circled{3}}
+ \underbrace{\Ltwo{\delta\eta^{n+1}}}_{\circled{4}}\, .
\end{align*}
The first term can be bounded through the central-difference estimate derived from Lemma \ref{lem:taylor}:
\begin{align*}
	\tau \Ltwo{\partial_t\rho^{n+\frac12} - \frac{\delta\rho^{n+1}}{\tau}} \leq \tau C\tau^{\frac32}\left(\int_{t^n}^{t^{n+1}}\Ltwo{\partial^3_{t}\rho}^2\mathrm{d}t\right)^{\frac12} = C\tau^{\frac52}\left(\int_{t^n}^{t^{n+1}}\Ltwo{\partial^3_{t}\rho}^2\mathrm{d}t\right)^{\frac12}\, .
\end{align*}
For $\circled{2}$, since $\rho$ satisfies the PDE \eqref{Transport:Eq} at all times, we can write
\begin{align*}
	\boldsymbol{u} \cdot \nabla \left[\rho^{n+\frac{1}{2}} - \left(\frac{\rho^{n+1} + \rho^n}{2}\right)\right] =  -\partial_t\rho^{n+\frac{1}{2}} + \left(\frac{\partial_t\rho^{n+1} + \partial_t\rho^n}{2}\right).
\end{align*}
Therefore, through the midpoint estimate derived from Lemma \ref{lem:taylor}, we get
\begin{align*}
\tau\Ltwo{	\boldsymbol{u} \cdot \nabla \left[\rho^{n+\frac{1}{2}} - \left(\frac{\rho^{n+1} + \rho^n}{2}\right)\right]} &= \tau\Ltwo{\partial_t\rho^{n+\frac{1}{2}} - \left(\frac{\partial_t\rho^{n+1} + \partial_t\rho^n}{2}\right)} \\
&\leq C\tau^{\frac52}\left(\int_{t^n}^{t^{n+1}}\Ltwo{\partial^3_{t}\rho}^2\mathrm{d}t\right)^{\frac12},
\end{align*}
as in the previous term.

For $\circled{3}$ we use Hölder's inequality and the approximation property of $\ih$:
\begin{equation*}
	\frac{\tau}{2}\Ltwo{\boldsymbol{u}\cdot\nabla(\eta^{n+1}+\eta^{n})} \leq \frac{\tau}{2}u_{\infty}\left|\eta^{n+1}+\eta^n\right|_{1,\,\Omega}
	\leq \frac{\tau}{2}u_{\infty} C  h^k \left|\rho^{n+1}+\rho^n\right|_{k+1, \,\Omega} .
\end{equation*}
Similarly, $\circled{4}$ can be bounded using interpolation and the Cauchy--Schwarz inequality:
\begin{align*}
	\Ltwo{\delta\eta^{n+1}} &= \Ltwo{\delta\rho^{n+1} - \ih{(\delta\rho^{n+1})}} \\
	&\leq C h^{k+1} \left|\delta\rho^{n+1}\right|_{k+1,\,\Omega}\\
	&= C h^{k+1} \left|\int_{t^n}^{t^{n+1}}1\cdot\partial_t\rho\,\mathrm{d}t\right|_{k+1,\,\Omega}\\
	&\leq C h^{k+1} \sqrt{\tau}\Norm{\partial_t\rho}_{L^{2}\left(t^n,t^{n+1};H^{k+1}(\Omega)\right)}\, .
\end{align*}

Collecting all four estimates gives
\begin{align*}
	\Lsnorm{e_h^{n+1}} \leq&\, \Lsnorm{e_h^n} + C\tau^{\frac52}\left(\int_{t^n}^{t^{n+1}}\Ltwo{\partial^3_{t}\rho}^2\mathrm{d}t\right)^{\frac12} +   \frac{C}{2}u_{\infty}  h^k \tau\left|\rho^{n+1}+\rho^n\right|_{k+1, \,\Omega}\\
    &\quad + C h^{k+1} \tau^{\frac12} \left(\int_{t^n}^{t^{n+1}}\left|\partial_t\rho \right|_{k+1,\,\Omega}^2\,\mathrm{d}t\right)^{\frac12},
\end{align*}
which, when added from $n=0$ to $n=N-1$, yields
\begin{align*}
	\Lsnorm{e_h^{N}} \leq&\,   \sum_{n=0}^{N-1}\left\lbrace C \tau^{\frac52}\left(\int_{t^n}^{t^{n+1}}\Ltwo{\partial^3_{t}\rho}^2\mathrm{d}t\right)^{\frac12} +   Cu_{\infty}  h^k \tau\left|\rho^{n+1}+\rho^n\right|_{k+1, \,\Omega}\right.\\
    &\quad + \left. C h^{k+1} \tau^{\frac12} \left(\int_{t^n}^{t^{n+1}}\left|\partial_t\rho \right|_{k+1,\,\Omega}^2\,\mathrm{d}t\right)^{\frac12} \right\rbrace  .
\end{align*}
The second sum can be estimated as
\begin{align*}
\sum_{n=0}^{N-1}Cu_{\infty}  h^k \tau\left|\rho^{n+1}+\rho^n\right|_{k+1, \,\Omega} \leq Cu_{\infty}  h^k \sum_{n=0}^{N} \tau\left|\rho^{n}\right|_{k+1, \,\Omega} ,
\end{align*}
while for the remaining ones we write
\begin{equation*}
	 \tau^{\frac52} \sum_{n=0}^{N-1} \left( \int_{t^n}^{t^{n+1}} \Ltwo{\partial^3_t\rho}^2 \, \mathrm{d}t \right)^{\frac{1}{2}}
	\leq \tau^{\frac52} \left( \sum_{n=0}^{N-1} 1 \right)^{\frac{1}{2}}
	\left( \sum_{n=0}^{N-1} \int_{t^n}^{t^{n+1}} \Ltwo{\partial^3_t\rho}^2 \, \mathrm{d}t \right)^{\frac{1}{2}} 
	= \tau^2  \sqrt{T} \Norm{\partial^3_t\rho}_{L^2(Q)}\,.
\end{equation*}
Similarly,
\begin{align*}
\sum_{n=0}^{N-1}\tau^{\frac12} \left(\int_{t^n}^{t^{n+1}}\left|\partial_t\rho \right|_{k+1,\,\Omega}^2\,\mathrm{d}t\right)^{\frac12}  \leq \sqrt{T} \Norm{\partial_t\rho}_{L^2(0,T;H^{k+1}(\Omega))} .
\end{align*}

Therefore, we get
\begin{align*}
	\Lsnorm{e_h^{N}} &\leq
	\left(C  \sqrt{T} \Norm{\partial_t^3\rho}_{L^2(Q)}\right)\tau^2 + \left(C  \sqrt{T} \Norm{\partial_t\rho}_{L^2(0,T;H^{k+1}(\Omega))}\right)h^{k+1} +  \left(C u_{\infty}  \sum_{n=0}^{N} \tau\left|\rho^{n}\right|_{k+1, \,\Omega}\right)h^k\, .
\end{align*}

It remains to estimate the interpolation error:
\begin{equation*}
	\Lsnorm{\eta^N} = \left(\Norm{\eta^N}_{0,\Omega}^2 + \frac{\tau^2}{4}\Norm{\boldsymbol{u}\cdot\nabla\eta^N}_{0,\Omega}^2\right)^{\frac12}
	\leq C   h^{k+1}\left|\rho^N\right|_{k+1,\,\Omega} + C \tau u_{\infty} h^k\left|\rho^N\right|_{k+1,\,\Omega} \, .
\end{equation*}
Finally, combining the discrete and interpolation errors yields
\begin{align*}
&\Lsnorm{\rho^N - \left(\rho_h^N\right)^+} = \Lsnorm{\eta^N + e_h^N} \leq \Lsnorm{\eta^N}+ \Lsnorm{e_h^N}  \\
&\leq   \left(\sqrt{T} \Norm{\partial_t^3\rho}_{L^2(Q)}\right)C\tau^2 + \left(u_{\infty}\sum_{n=0}^N\tau\left|\rho^n\right|_{k+1,\,\Omega} \right) Ch^k + \left(\sqrt{T}\Norm{\partial_t\rho}_{L^2\left(0,T;H^{k+1}(\Omega)\right)}+\left|\rho^N\right|_{k+1,\,\Omega} \right)Ch^{k+1},
\end{align*}
which concludes the proof.
\end{proof}

\section{The algebraic system: accelerated iterative solution}\label{Sec:Anderson}

To solve the nonlinear problem defined in \eqref{LS-NBPM} at each time step $t^n$, we start by recalling the definition \eqref{LS-NBPM-bis} of the bilinear form $a^{n+1}(\cdot,\cdot)$ and the linear functional $L^{n+1}(\cdot)$. Then, we employ the following fixed-point Richardson iteration with damping parameter $\omega \in (0,1]$: starting from the initial guess $\tilde{\rho}^{(0)}:= \rho_h^{n}$, find $\tilde{\rho}^{(\texttt{k}+1)} \in V_h$ such that

\begin{equation}\label{Eq:Richardson_it_SH}
a^{n+1}(\tilde\rho^{(\texttt{k}+1)}, \phi_h) \;=\; a^{n+1}(\tilde\rho^{(\texttt{k})},\phi_h) \;+\; \omega\Big[\, L^{n+1}(\phi_h) \;-\; a^{n+1}\big((\tilde\rho^{(\texttt{k})})^+ ,\phi_h \big) \;-\; s\big((\tilde\rho^{(\texttt{k})})^-,\, \phi_h\big) \,\Big],
\end{equation}
for all $\phi_h\in V_h$, and 
for $\texttt{k} = 0, 1, \ldots, N_{\max}$, or 
until the stopping criterion
\begin{equation}\label{eq:rich_tolerance}
\| \tilde{\rho}^{(\texttt{k}+1)} - \tilde{\rho}^{(\texttt{k})} \|_{0,\Omega} \leq \texttt{tol}\,,
\end{equation}
is satisfied. Then, we set $\rho_h^{n+1} = \tilde{\rho}^{(\texttt{k}+1)}$. Importantly, the symmetry of the bilinear form $a^{n+1}(\cdot,\cdot)$ makes the linear system \eqref{Eq:Richardson_it_SH} symmetric. This can be exploited by using fast solvers such as Cholesky decomposition (for direct solvers) or conjugate gradient (for iterative ones). 

Beyond only exploiting its symmetry, further speed-up can be gained through acceleration methods, which are well known for substantially reducing the number of fixed-point iterations needed to reach a solution that fulfills the imposed tolerance. In this paper, we use the \textit{Anderson Acceleration (AA)} method, first introduced in \cite{Anderson-1965}, and then reformulated in \cite{Walker-Ni-2011} and \cite{Saad_2025}. In this work, we follow the reformulation proposed in \cite{Saad_2025}. The idea is to replace the plain Richardson iterate with a linear combination of the $m$ most recent iterates, chosen to minimise the norm of the corresponding
linear combination of residuals. 

Let $\boldsymbol\rho^{(\texttt{k})}$
denote the vector representation of $\tilde\rho^{(\texttt{k})}$, and let $G(\boldsymbol\rho^{(\texttt{k})})$
denote the corresponding representation of the solution $\tilde\rho^{(\texttt{k}+1)}$ of
\eqref{Eq:Richardson_it_SH}. Additionally, define
$\mathbf f^{(\texttt{k})} = G(\boldsymbol\rho^{(\texttt{k})}) - \boldsymbol\rho^{(\texttt{k})}$ as the corresponding increment between the current plain Richardson iteration solution and the previous Anderson Accelerated solution. Given a window of depth $m$, only
the most recent iterates are retained, so that at iteration $\texttt{k}$ the
effective depth is $m_{\texttt{k}} = \min(m, \texttt{k})$. Then, we define the increment matrices
\begin{equation}
  X_{\texttt{k}} = \big[\Delta\boldsymbol\rho^{(\texttt{k}-m)},\dots,\Delta\boldsymbol\rho^{(\texttt{k}-1)}\big],
  \qquad
  F_{\texttt{k}} = \big[\Delta\mathbf f^{(\texttt{k}-m)},\dots,\Delta\mathbf f^{(\texttt{k}-1)}\big],
\end{equation}
with $\Delta(\cdot)^{(i)} := (\cdot)^{(i+1)}-(\cdot)^{(i)}$. The extrapolation coefficients
$\boldsymbol\gamma^{(\texttt{k})}$, also called mixing coefficients, are obtained by solving the least-squares problem
\begin{equation}\label{Eq:AA_lsq}
  \boldsymbol\gamma^{(\texttt{k})} = \operatorname*{arg\,min}_{\boldsymbol\gamma\in \mathbb{R}^{m_{\text{k}}}}
  \big\|\mathbf f^{(\texttt{k})} - F_{\texttt{k}}\boldsymbol\gamma\big\|_2,
\end{equation}
which we can solve through the normal equations
\begin{equation}\label{Eq:AA_normal_eq}
  \big(F_{\texttt{k}}^\top F_{\texttt{k}} \big)\,\boldsymbol\gamma^{(\texttt{k})}
  = F_{\texttt{k}}^\top \mathbf f^{(\texttt{k})}.
\end{equation}

Since $F_{\texttt{k}}^\top F_{\texttt{k}}$ can become ill-conditioned near convergence, following \cite{Saad_2025} we regularise \eqref{Eq:AA_normal_eq} and solve
\begin{equation}\label{Eq:AA_lsq_reg}
  \big(F_{\texttt{k}}^\top F_{\texttt{k}} + \mathcal{E}\|F_{\texttt{k}}^\top F_{\texttt{k}}\|_F\, I\big)\,\boldsymbol\gamma^{(\texttt{k})}
  = F_{\texttt{k}}^\top \mathbf f^{(\texttt{k})},
\end{equation}
with $\mathcal{E} = 10^{-10}$ throughout this work. 

Finally, the accelerated iterate is given by
\begin{equation}\label{Eq:AA_update}
  \boldsymbol\rho^{(\texttt{k}+1)} = G(\boldsymbol\rho^{(\texttt{k})}) - (X_{\texttt{k}}+F_{\texttt{k}})\,\boldsymbol\gamma^{(\texttt{k})}.
\end{equation}
Setting $m=0$ recovers the plain Richardson iteration
\eqref{Eq:Richardson_it_SH}. The full Anderson-accelerated Richardson iteration is described in Algorithm \ref{alg:anderson-richardson}. 

\begin{algorithm}[h!]
\caption{Anderson-accelerated Richardson iteration}
\label{alg:anderson-richardson}
\begin{algorithmic}[1]
\REQUIRE initial guess $\boldsymbol\rho^{(0)} = \boldsymbol\rho^n$, window depth $m$,
tolerance $\mathrm{tol}$, maximum iterations $N_{\max}$, regularisation
parameter $\mathcal{E}$
\ENSURE accelerated solution
\STATE Initialise history buffers $\mathcal X \gets \emptyset$, $\mathcal F \gets \emptyset$
\FOR{$\texttt{k} = 0, 1, \dots, N_{\max}$}
    \STATE Solve \eqref{Eq:Richardson_it_SH} for $G(\boldsymbol\rho^{(\texttt{k})})$
    \STATE $\mathbf f^{(\texttt{k})} \gets G(\boldsymbol\rho^{(\texttt{k})}) - \boldsymbol\rho^{(\texttt{k})}$
    \IF{$\| \tilde{\rho}^{(\texttt{k}+1)} - \tilde{\rho}^{(\texttt{k})} \|_{0,\Omega} < \texttt{tol}$}
        \RETURN $G(\boldsymbol\rho^{(\texttt{k})})$
        \COMMENT{converged}
    \ENDIF
    \IF{$\texttt{k} = 0$}
        \STATE $\boldsymbol\rho^{(\texttt{k}+1)} \gets G(\boldsymbol\rho^{(\texttt{k})})$
        \COMMENT{plain Richardson step, no acceleration}
    \ELSE
        \STATE Append $\boldsymbol\rho^{(\texttt{k})}, \mathbf f^{(\texttt{k})}$ to $\mathcal X, \mathcal F$;
               discard oldest entries if $|\mathcal F| > m$
        \STATE $m_{\texttt{k}} \gets |\mathcal F|$
        \IF{$m_{\texttt{k}} > 1$}
            \STATE Form $X_{\texttt{k}}, F_{\texttt{k}}$ from consecutive differences of $\mathcal X, \mathcal F$
            \STATE Solve $\big(F_{\texttt{k}}^\top F_{\texttt{k}} + \mathcal{E}\|F_{\texttt{k}}^\top F_{\texttt{k}}\|_F\, I\big)\,\boldsymbol\gamma^{(\texttt{k})} = F_{\texttt{k}}^\top \mathbf f^{(\texttt{k})}$
            \STATE $\boldsymbol\rho^{(\texttt{k}+1)} \gets G(\boldsymbol\rho^{(\texttt{k})}) - (X_{\texttt{k}}+F_{\texttt{k}})\,\boldsymbol\gamma^{(\texttt{k})}$
        \ELSE
            \STATE $\boldsymbol\rho^{(\texttt{k}+1)} \gets G(\boldsymbol\rho^{(\texttt{k})})$
            \COMMENT{insufficient history for acceleration}
        \ENDIF
    \ENDIF
\ENDFOR
\RETURN $\boldsymbol\rho^{(N_{\max}+1)}$
\COMMENT{$N_{\max}$ reached without convergence}
\end{algorithmic}
\end{algorithm}

\section{Numerical experiments}\label{sec:Numer}

This section presents several numerical examples to verify our theory and also compare standard least-squares methods to the corresponding nodally bound-preserving formulation. We assess convergence, stability, mass conservation, and other qualitative aspects of the approximations, such as smoothness and sharpness. All tests considered simplicial Lagrangian finite elements, with the convective velocity either imposed analytically or given as part of an incompressible flow system. 
To keep the discussion and comparative studies as clear as possible, we use throughout the numerical experiments a tolerance of $\texttt{tol}=10^{-8}$ in \eqref{eq:rich_tolerance}. In addition, all the linear  systems are solved directly through a sparse Cholesky factorisation provided by the Multifrontal Massively Parallel sparse direct Solver (MUMPS); however, applying a preconditioned conjugate gradient method to solve \eqref{Eq:Richardson_it_SH} would be straightforward and generally more efficient. The numerical experiments were implemented in FEniCSx (DOLFINx 0.10.0) \cite{BarattaEtal-2023, ScroggsEtal-2022, BasixJoss-2022, AlnaesEtal-2014}, an open-source finite element library for the solution of PDEs. MUMPS was accessed through the PETSc library \cite{petsc-user-ref-2026} via its Python interface \texttt{petsc4py} \cite{dalcinpazklercosimo-2011}. 
The implementation is publicly available on GitLab.\footnote{\codeurl}

\subsection{Rotation of a smoothed step}
The first set of tests considers a smoothed step in pure rotation, where $\Omega=\{(x,y)\in\mathbb{R}^2: x^2 + y^2 < (\nicefrac12)^2\}$. The velocity field $\bbeta = 2\pi(-y,x)^{\top}$ is constant in time, and the initial solution is
\begin{align*}
\rho_0(x, y) = \frac{1}{2} + \frac{1}{2} \tanh\!\left(\varkappa_{\rm sharp}\left(0.1 - \sqrt{(x + 1/4)^2 + y^2}\right) \right),
\end{align*}
where $\varkappa_{\rm sharp}>0$ is a smoothness parameter (the limit $\varkappa_{\rm sharp}\rightarrow \infty$ recovers a discontinuous circular step). This corresponds approximately to a circular ``unit bump'' with radius $0.1$, centred at $(x,y) = (-0.25,0)$. The exact solution is thus a simple counter-clockwise rotation of $\rho_0$ according to an angle $\Theta(t) = 2\pi t$.

\subsubsection{Convergence}
We first take $\varkappa_{\rm sharp}=20$ and study convergence in space and time separately, considering a refinement index $j = 1,\dots,r$, where $r$ denotes the number of refinement levels. For the spatial study, we fix $\tau=10^{-4}$ and consider structured meshes with mesh sizes $h_j = 10^{-1}\times 2^{-(j-1)}$, with $r_{h}=6$ levels of refinement. For the temporal study, we fix $h=2.5\times10^{-3}$ and consider time-step sizes $\tau_j = 10^{-2}\times 2^{-(j-1)}$, with $r_{\tau}=7$ levels of refinement. We use a relaxation parameter $\omega = 0.05$ for the Richardson iteration, a history depth $m = 10$ for Anderson acceleration and $\alpha=1.0$ in the stabilisation form.
Figure \ref{fig:case1_L2_convergence} describes the asymptotic behaviour of the $L^2(\Omega)$ error using $\mathbb{P}_1$ and $\mathbb{P}_2$ elements. According to Theorem \ref{Theo:Convergence}, fixing a sufficiently small $\tau$ and refining the mesh size should yield at least an $O(h^k)$ convergence using $\mathbb{P}_k$ elements, which is confirmed by the results depicted in Figure~\ref{fig:case1_L2_vs_h} (although with a higher power of $h$ than predicted by the theory, which might hint at a superconvergence for this problem).
Then, with the mesh fixed and varying $\tau$, second-order temporal convergence was observed for both $\mathbb{P}_1$ and $\mathbb{P}_2$ elements, as shown in Figure \ref{fig:case1_L2_vs_tau}. 

\begin{figure}[htbp]
    \centering
    \begin{subfigure}[b]{0.48\textwidth}
        \centering
        \includegraphics[width=\textwidth]{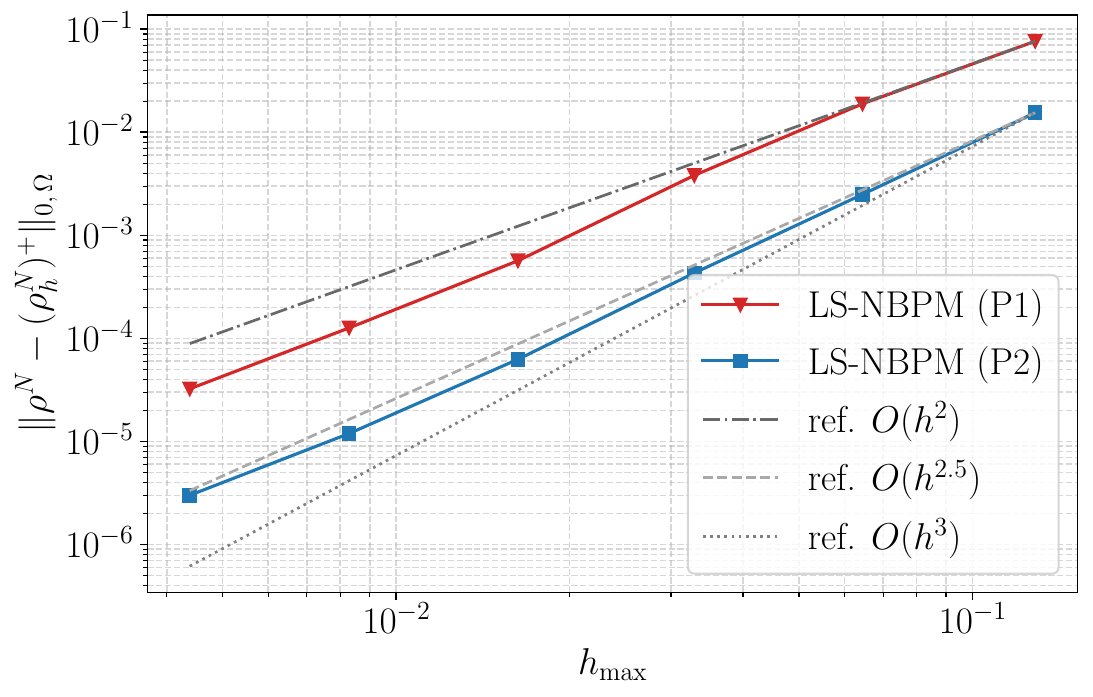}
        \caption{$L^2(\Omega)$ error at $t=1$ over mesh refinement.}
        \label{fig:case1_L2_vs_h}
    \end{subfigure}
    \hfill
    \begin{subfigure}[b]{0.48\textwidth}
        \centering
        \includegraphics[width=\textwidth]{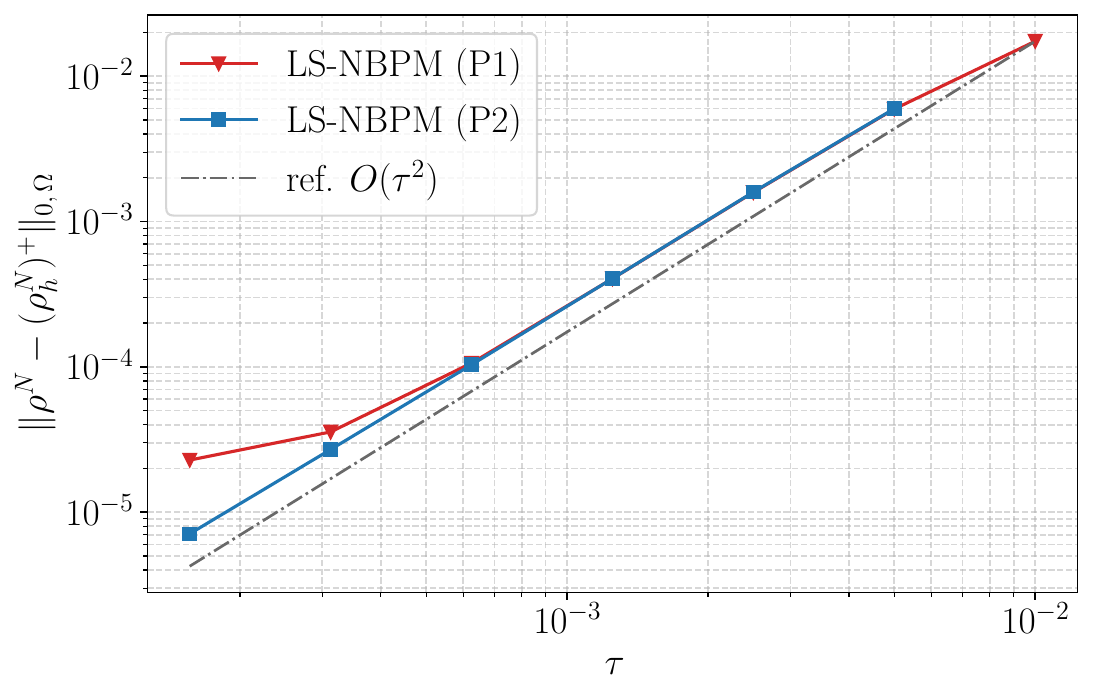}
         \caption{$L^2(\Omega)$ error at $t=1$ over temporal refinement.}
        \label{fig:case1_L2_vs_tau}
    \end{subfigure}
    \caption{Convergence of the error $\| \rho^N -  (\rho^N_h)^+\|_{0,\Omega}$ under mesh refinement (left) and time-step size refinement (right) for $\mathbb{P}_1$ and $\mathbb{P}_2$ elements.}
    \label{fig:case1_L2_convergence}
\end{figure}

\subsubsection{Conservation: mass and bounds}

Since $\dive\bu=0$ for all $t$, the exact solution of this problem conserves both the mass and the initial extrema, i.e., $\rho$ satisfies
\begin{align*}
\underbrace{\int_{\Omega}\rho(\bx,t)\,\mathrm{d}\Omega}_{\mathrm{M}(t)} = \underbrace{\int_{\Omega}\rho_0\,\mathrm{d}\Omega}_{\mathrm{M}(0)} 
\quad \text{and} \quad
	\rho(\bx,t) \in \big[\rho_0^{\mathrm{min}},\rho_0^{\mathrm{max}}\big] \approx [0,0.982]     \ \ \forall\, t\geq 0 \, .
\end{align*}
Our method fulfills the latter, but not (exactly) the former. Conversely,  the baseline discretisation \eqref{LS-FEM} conserves mass, but not the bounds. Figure \ref{fig:case1_masschange} shows the relative mass change, defined as $\Delta M\,[\%] = (\mathrm{M}(t)/\mathrm{M}(0) - 1) \times  100\% $, for both cases; mesh refinement with fixed $\tau$, and temporal refinement with fixed mesh. The $\tau$ and $h$ are considered the same as in the previous paragraph. Both cases show good mass conservation across the refinement levels, with a maximum of around $6.4\,\%$ of added mass for the  coarsest mesh. 

\begin{figure}[htbp]
    \centering
    \begin{subfigure}[b]{0.48\textwidth}
        \centering
        \includegraphics[width=\textwidth]{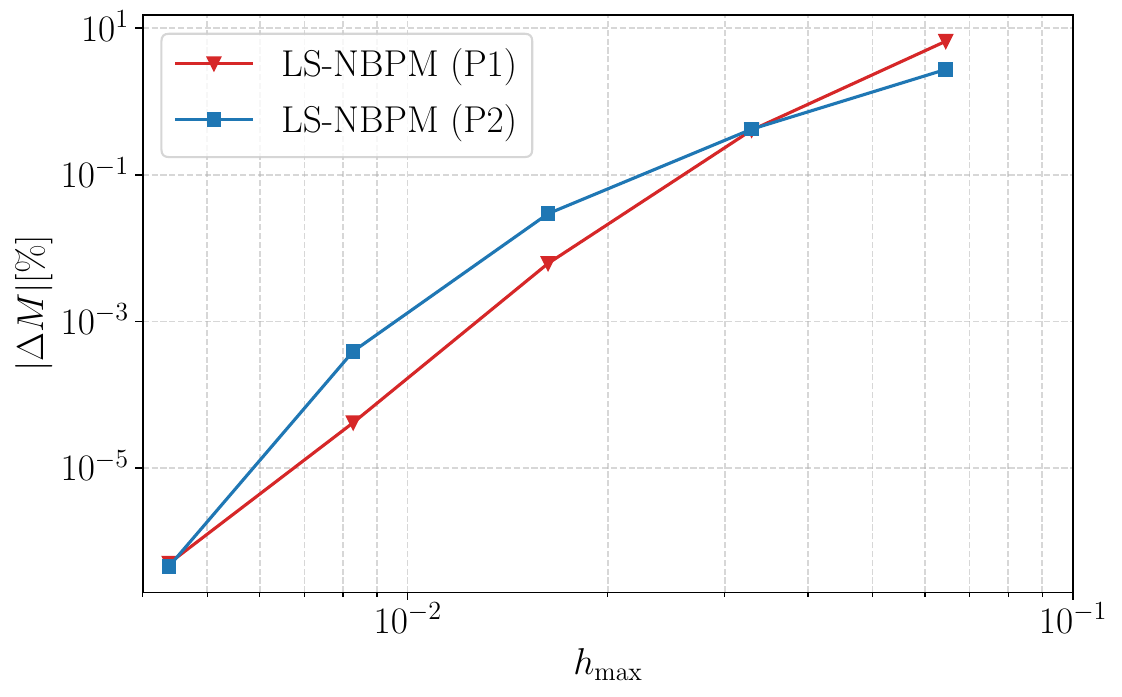}
        \caption{$\Delta M [\%]$ over mesh refinement.}
        \label{fig:case1_masschange_vs_h}
    \end{subfigure}
    \hfill
    \begin{subfigure}[b]{0.48\textwidth}
        \centering
        \includegraphics[width=\textwidth]{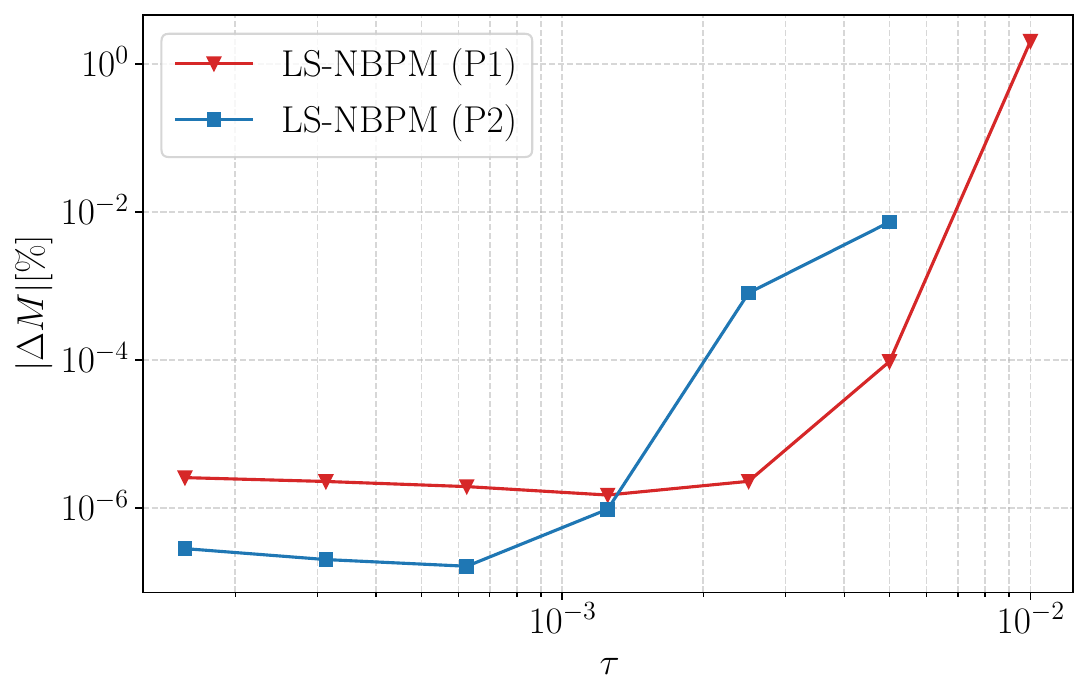}
         \caption{$\Delta M [\%]$ over time-step size refinement.}
        \label{fig:case1_masschange_vs_tau}
    \end{subfigure}
    \caption{Relative mass change over mesh refinement (left) and temporal refinement (right) for $\mathbb{P}_1$ and $\mathbb{P}_2$ elements.}
    \label{fig:case1_masschange}
\end{figure}

 \subsubsection{Accelerated fixed-point iteration}

The benefits of the accelerated fixed-point iteration given by the AA method are showcased in Figure~\ref{fig:case1_richardson_iters_vs_time}. It shows the number of Richardson iterations required to reach the tolerance defined in Eq.~\eqref{eq:rich_tolerance}, for a fixed $\tau = 10^{-4}$ and mesh refinement levels 2, 3, and 4, using  $\mathbb{P}_1$ elements. Here, a window of $m=10$ previous iterations was considered for the AA method, and $\omega=0.1$ as damping factor. The number of iterations using the AA method was considerably reduced---by around an order of magnitude in each case. More detailed results are shown in Table~\ref{tab:elapsed_time_richardson_iters}, which presents the average number of fixed-point iterations needed to converge for the cases $m=0$ and $m=10$, for a sequence of meshes. The substantial reduction in the iteration count highlights the potential for improvement in the fixed-point iteration method applied, and the applicability of the current LS-NBP method to more complex cases.

\begin{table}[htbp]
  \centering
  \begin{tabular}{c|cc|c|c}
    \toprule
    Mesh factor & DOFs & $h_{\max}$ & $m$ & Avg. Richardson Iters. \\
    \midrule
    \multirow{2}{*}{1} & \multirow{2}{*}{114} & \multirow{2}{*}{$1.28\times10^{-1}$} & 0 & 94 \\
     &  &  & 10 & 12 \\
    \midrule
    \multirow{2}{*}{2} & \multirow{2}{*}{414} & \multirow{2}{*}{$6.44\times10^{-2}$} & 0 & 104 \\
     &  &  & 10 & 11 \\
    \midrule
    \multirow{2}{*}{3} & \multirow{2}{*}{1,540} & \multirow{2}{*}{$3.29\times10^{-2}$} & 0 & 113 \\
     &  &  & 10 & 11 \\
    \midrule
    \multirow{2}{*}{4} & \multirow{2}{*}{5,976} & \multirow{2}{*}{$1.63\times10^{-2}$} & 0 & 122 \\
     &  &  & 10 & 10 \\
    \midrule
    \multirow{2}{*}{5} & \multirow{2}{*}{23,600} & \multirow{2}{*}{$8.29\times10^{-3}$} & 0 & 131 \\
     &  &  & 10 & 9 \\
    \bottomrule
  \end{tabular}
  \caption{Average Richardson iterations for $m=0$ (plain) vs.~$m=10$ (accelerated).}
  \label{tab:elapsed_time_richardson_iters}
\end{table}

\begin{figure}[htbp]
  \centering
\includegraphics[width=0.9\textwidth]{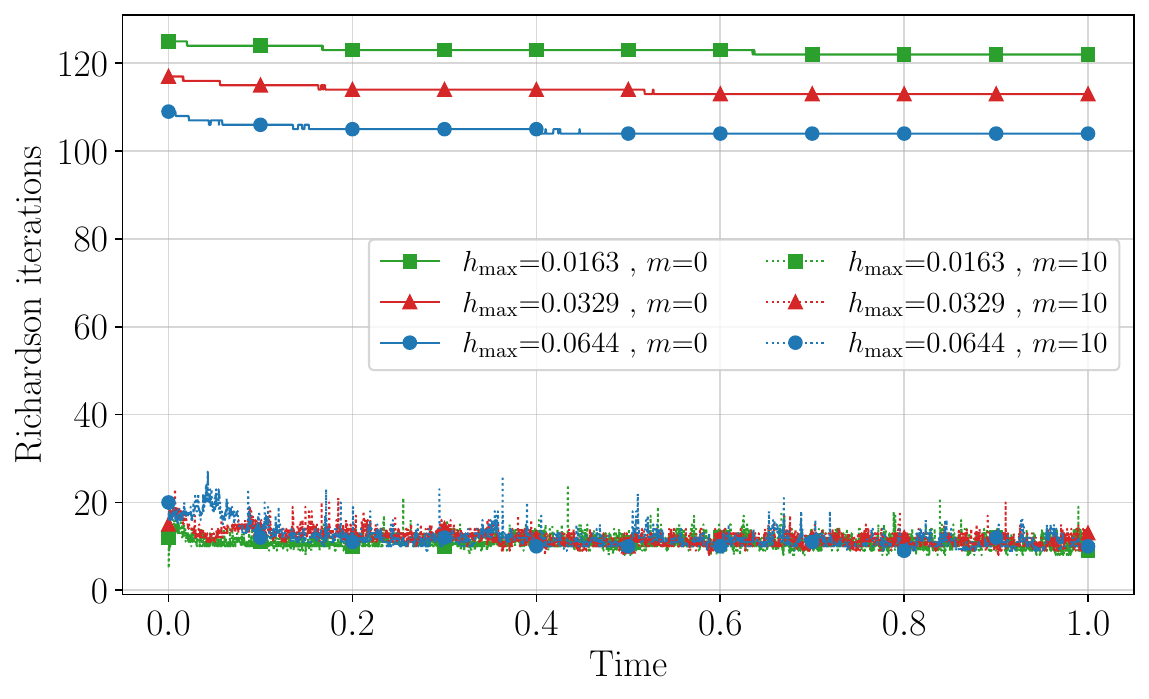}
  \caption{Richardson iterations over time for $\tau = 10^{-4}$, using meshes $M_i \, ; \; i \in \lbrace 2,3,4\rbrace$, for plain Richardson iterations ($m=0$), and Anderson-Accelerated Richardson with $m=10$.}
  \label{fig:case1_richardson_iters_vs_time}
\end{figure}

\subsubsection{Stability for a near-discontinuous case}

The following results consider $\varkappa_{\rm sharp} = 100$, which yields a very sharp, nearly discontinuous bump. We use $\tau = 10^{-3}$ and $\mathbb{P}_1$ elements. Two meshes were analysed: those corresponding to refinement levels 4 ($M_4$) and 5 ($M_5$) of the mesh refinement study, with approximately $5,976$ and $23,600$ degrees of freedom (DOFs), respectively.
Figures ~\ref{fig:case1_snapshots_2d} and \ref{fig:case1_snapshots_3d} show snapshots of the numerical solutions at time $t = 1$, after one complete ``lap'' for mesh $M_4$. Here, the green line in Figure \ref{fig:case1_snapshots_2d} shows the isocontour where $\rho_h=0.5$.  
To compare the methods quantitatively, Figure~\ref{fig:case1_densityovercircularline} shows the numerical solutions along the circle of radius $r=0.25$ centred at $(0,0)$, parametrised by the angle $\beta \in [0,2\pi)$. Despite providing stabilisation with respect to the plain Galerkin method, we observe that the baseline LS solution severely violates the bounds and presents oscillations near the sharp fronts. On  the other hand, the current method delivers a discrete solution that remains within the admissible range $(0,1)$, where only a very minor oscillation, localised in the wake of the step, can be observed.

\begin{figure}[htbp]
  \centering
\includegraphics[width=0.9\textwidth]{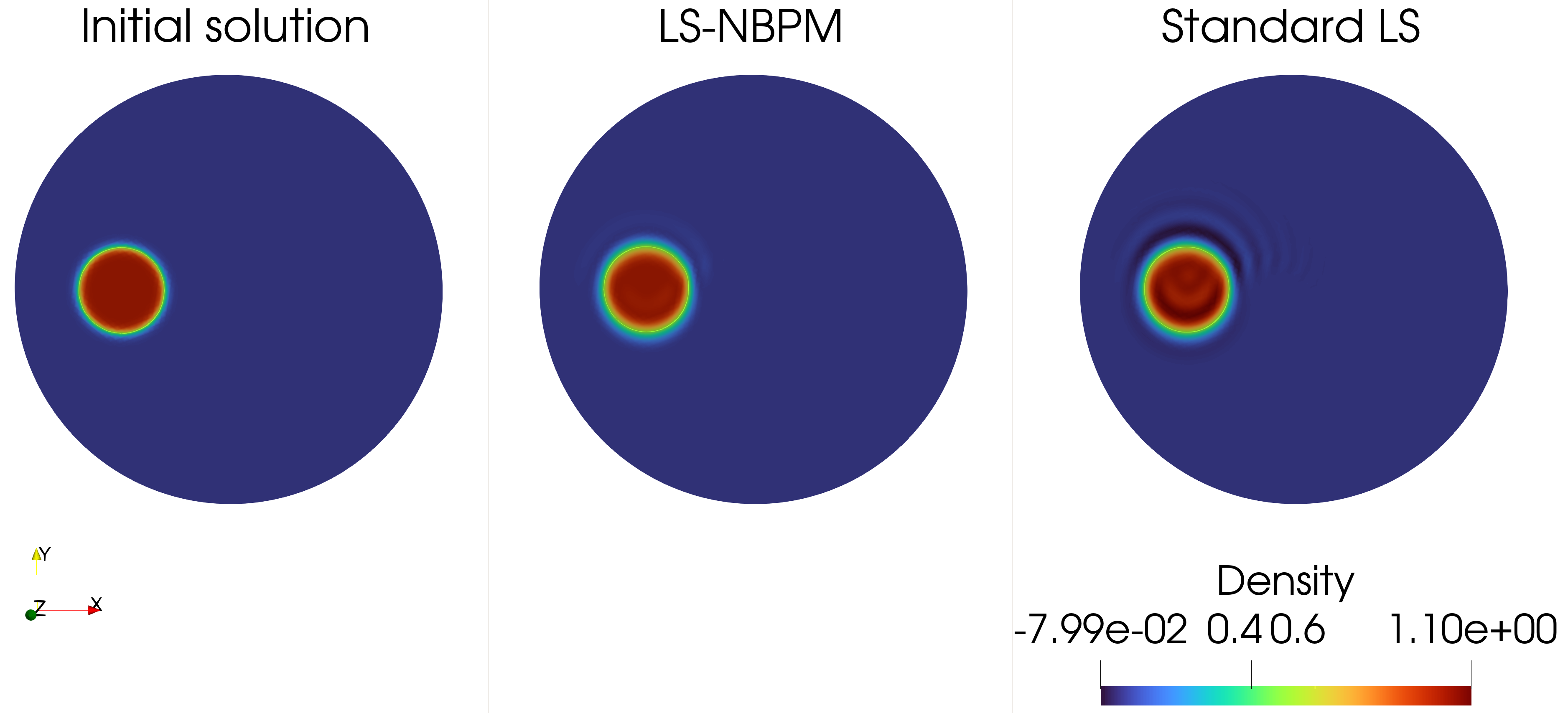}
  \caption{Snapshot of the numerical solution at $t = 1$ using mesh $M_4$, 2D top view.}
  \label{fig:case1_snapshots_2d}
\end{figure}

\begin{figure}[htbp]
  \centering
\includegraphics[width=0.9\textwidth]{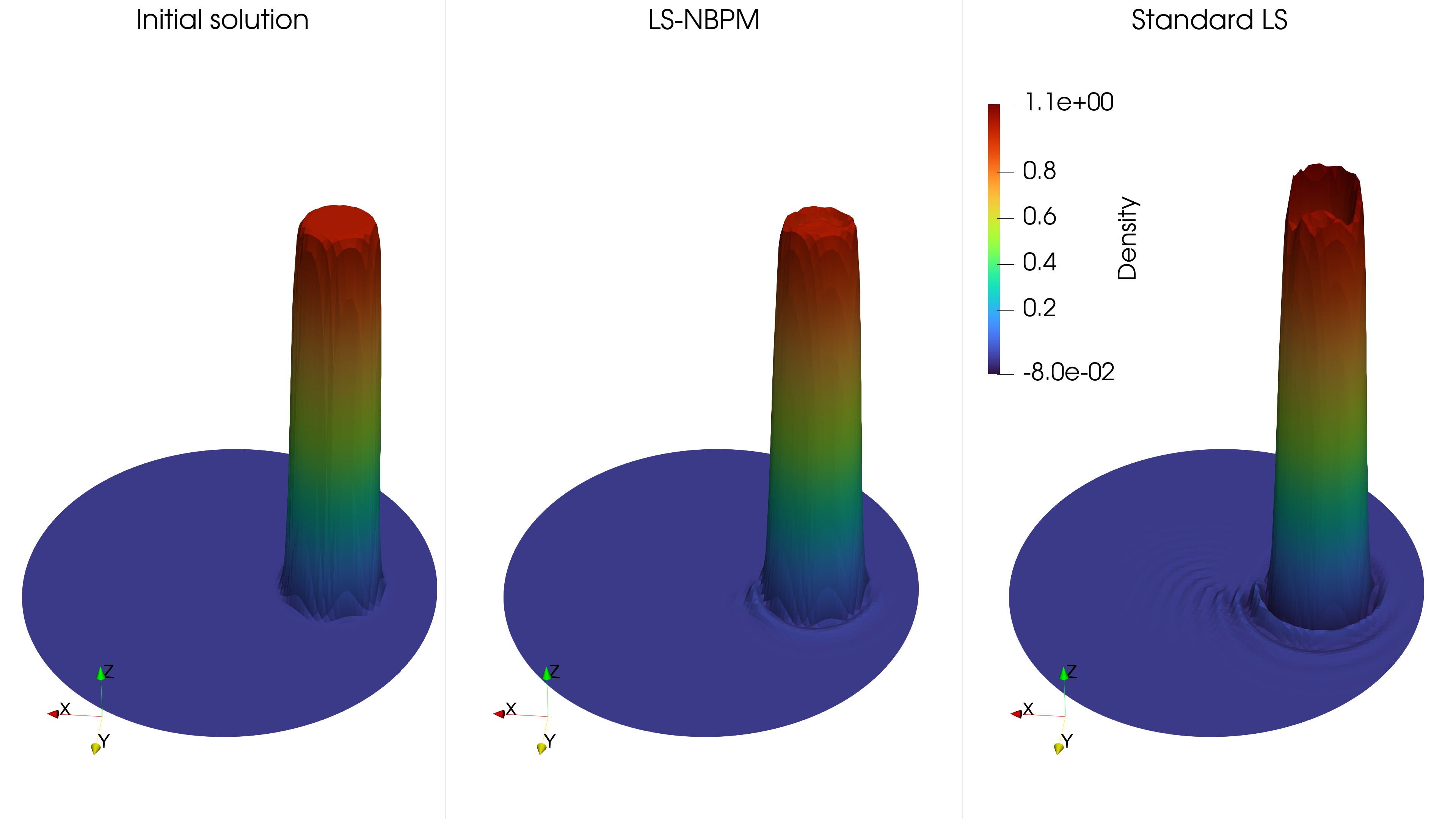}
  \caption{Snapshot of the numerical solution after one lap ($t = 1$) for mesh $M_4$. The solution is shown as a 3D surface, with height proportional to $\rho$. The reference solution on the left is the $M_4$-interpolation of the exact solution.}
  \label{fig:case1_snapshots_3d}
\end{figure}

\begin{figure}[htbp]
    \centering
    \begin{subfigure}[b]{0.48\textwidth}
        \centering
        \includegraphics[width=\textwidth]{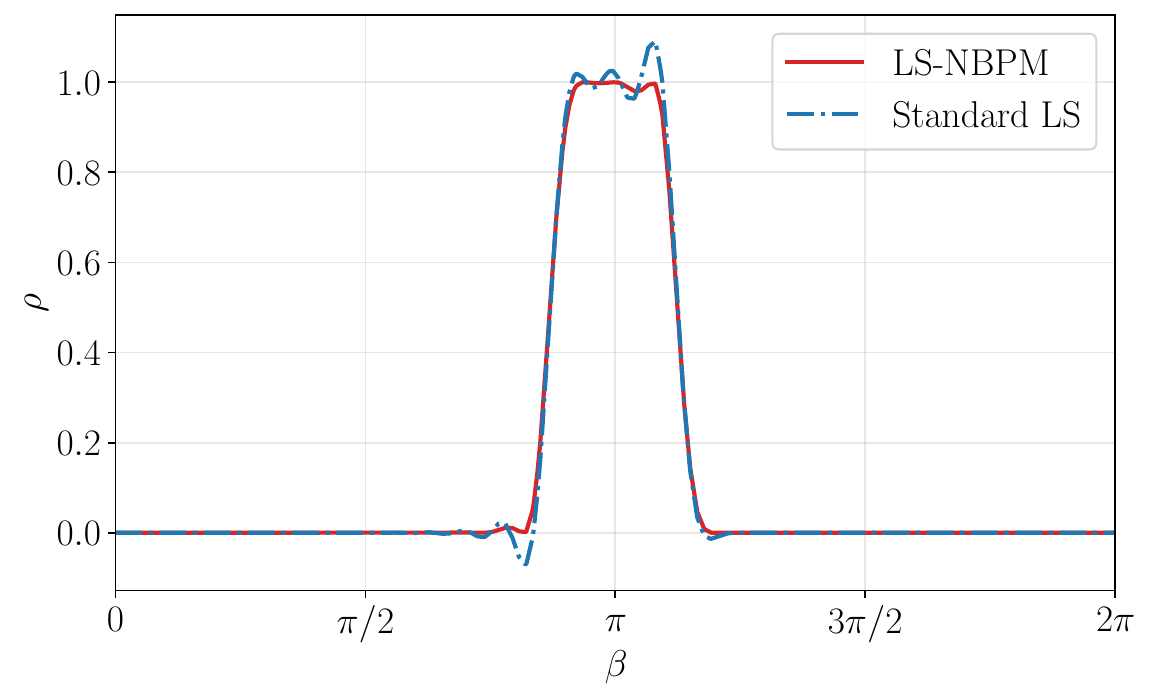}
        \caption{Discrete solutions over circular line for mesh $M_4$.}
        \label{fig:case1_densityovercircularline_factor3}
    \end{subfigure}
    \hfill
    \begin{subfigure}[b]{0.48\textwidth}
        \centering
        \includegraphics[width=\textwidth]{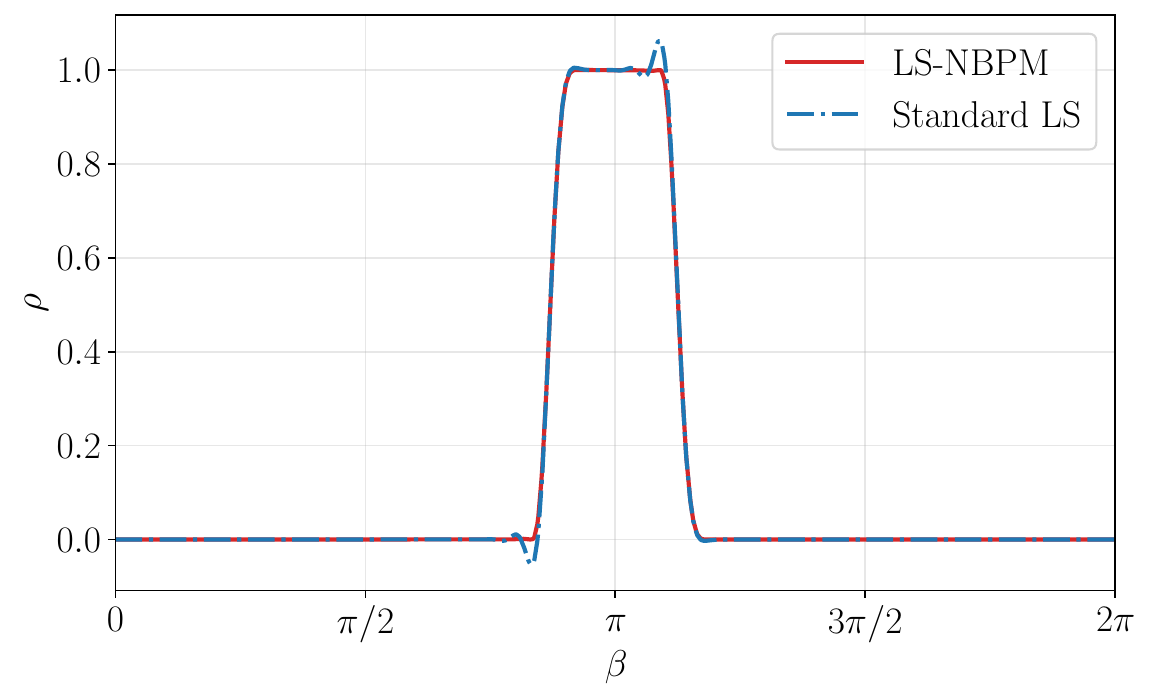}
         \caption{Discrete solutions over circular line for mesh $M_5$.}
         \label{fig:case1_densityovercircularline_factor4}
    \end{subfigure}
    \caption{Numerical solutions at time $t=1$ along a circular line centred at $(0,0)$, with radius $r=0.25$, parametrised by an angle $\beta\in [0,2\pi)$. The results consider a middle-coarse mesh (left), and a middle-fine mesh (right).}
    \label{fig:case1_densityovercircularline}
\end{figure}

\subsection{A two-phase flow benchmark}
\label{sec:2phase}

We finally widen the applicability of our method by applying it to the Navier--Stokes system with variable density.  The so-called Rayleigh--Taylor instability is a popular benchmark for variable-density flow solvers \cite{Guermond2000}. The setup has two fluids initially at rest in $\Omega = (0,a/2)\times(-2a,2a)\subset\R^2, a>0$,  under gravity.  We decompose the boundary as $\partial\Omega=\Gamma_\ell\cup\Gamma_{\rm tb}$,
where $\Gamma_{\rm tb}:= \{(x,y)\in\partial\Omega: |y|=2a\}$ and $\Gamma_{\ell}:= \{(x,y)\in \partial\Omega: x=0 \;\textrm{or}\; x=a/2\}$. The system of equations is a consistent reformulation
derived in \cite{Guermond2000}: Find the velocity $\bu$, pressure $p$, and
density $\rho$ such that
\begin{align}
\partial_t\rho + \bu\cdot\nabla\rho + \frac{\dive\bu}{2}\rho &= 0 && \textrm{in}\;(0,T]\times\Omega\,,\label{RT-1}\\
\sqrt{\rho}\,\partial_t\big(\sqrt{\rho}\bu\big) + (\rho\nabla\bu)\bu + \frac{1}{2}[\dive(\rho\bu)]\bu -
\dive(2\nu\rho\boldsymbol{\varepsilon}(\bu)) + \nabla p &= \rho\boldsymbol{g} && \textrm{in}\;(0,T]\times\Omega\,,\label{RT-2}\\
\dive\bu &= 0 &&\textrm{in}\;(0,T]\times\Omega\,,\label{RT-3}\\
\bu &= \boldsymbol{0} &&\textrm{in}\;(0,T]\times\Gamma_{\rm tb}\,,\label{RT-4}\\
(\boldsymbol{\varepsilon}(\bu)\bn)\cdot\boldsymbol{t} = 0 \quad \textrm{and}\quad\bu\cdot\bn &= 0 &&\textrm{in}\;(0,T]\times \Gamma_{\ell}\,,\label{RT-5}\\
\rho = \rho_0^{}\quad\textrm{and}\quad \bu &= \boldsymbol{0} &&\textrm{in}\;\{0\}\times\Omega\,.\label{RT-6}
\end{align}
Here, $\nu>0$ is the constant kinematic viscosity of the fluid,  $\boldsymbol{\varepsilon}(\bu)
=(\nabla\bu + \nabla\bu^{\top})/2$ is the symmetric velocity gradient, and $\boldsymbol{g}=(0,-g)^{\top}$ denotes the downward-pointing constant gravity. The skew-symmetrising term $(\nicefrac12\dive\bu)\rho$ added to \eqref{RT-1} guarantees unconditional stability for $\rho$ even if the velocity $\boldsymbol{u}$ is not \textsl{exactly} solenoidal \cite{Guermond2000}.

In this setup, the denser fluid, II, sits initially on top of the lighter fluid, I ($\rho_{\mathrm{II}} > \rho_{\mathrm{I}}$), according to the initial density field
\begin{align*}
\rho_0(x,y) = \frac{\rho_{\mathrm{II}}+\rho_{\mathrm{I}}}{2} + \frac{\rho_{\mathrm{II}} - \rho_{\mathrm{I}}}{2}\tanh\left(100\frac{y}{a} + 10\cos \frac{2\pi x}{a}\right) .
\end{align*}
The flow regime is parametrised by the Reynolds and Atwood numbers
\begin{align*}
\text{Re} = \frac{a\sqrt{ag}}{\nu} = 1000 \ \ \text{and} \ \ \text{At} = \left|\frac{\rho_{\mathrm{II}} - \rho_{\mathrm{I}}}{\rho_{\mathrm{II}} + \rho_{\mathrm{I}}}\right| = \frac{1}{2}\, ,
\end{align*}
so that $\rho_{\mathrm{II}} = 3\rho_{\mathrm{I}}$.  %

We use a second-order IMEX scheme that, at each time step, first solves for density using extrapolated velocities, then updates velocity and pressure from a linearised discretisation of the momentum equation. Defining $\hat{\bu}^{n+1}_h := 2\bu^n_h - \bu^{n-1}_h$, we use an IMEX Crank--Nicolson method to evolve the density, then a second-order IMEX backward scheme for the momentum equation: Given $\rho^0_h=i_h(\rho_0)$ and
$\bu_h^0=\boldsymbol{0}$, find $(\rho_h^{n+1},\bu_h^{n+1},p_h^{n+1})$, for $n=2,3,\dots,N-1$, by solving
\begin{align}
& \inner {\frac{(\rho_h^{n+1})^+- (\rho_h^{n})^+}{\tau} + \frac{1}{2}\mathcal{C}^{n+1}(\rho_h^{n+1})^+ + \frac{1}{2}\mathcal{C}^{n}(\rho_h^{n})^+}{ \frac{\phi_h}{\tau}+\frac{1}{2}\mathcal{C}^{n+1}\phi_h}
+ \tau^{-2}s((\rho_h^{n+1})^-,\phi_h)=0\,,\\
&\frac{1}{2\tau}\!\left\langle\sqrt{(\rho_h^{n+1})^+}\left[3\sqrt{(\rho_h^{n+1})^+}\,\bu_h^{n+1}-4\sqrt{(\rho_h^n)^+}\,\bu_h^{n} + \sqrt{(\rho_h^{n-1})^+}\,\bu_h^{n-1}\right],\bv_h\right\rangle_\Omega  \nonumber\\
&+ \left\langle \big[(\rho_h^{n+1})^+\nabla\bu_h^{n+1}\big]\hat{\bu}_h^{n+1} + \frac{\dive\!\big[(\rho_h^{n+1})^+\hat{\bu}_h^{n+1}\big]}{2}\bu_h^{n+1},\bv_h\right\rangle_\Omega  \nonumber\\
& + 2\nu\!\left\langle(\rho_h^{n+1})^+\boldsymbol{\varepsilon}(\bu_h^{n+1}),\boldsymbol{\varepsilon}(\bv_h)\right\rangle_\Omega
 -\langle p_h^{n+1},\dive \bv_h\rangle_\Omega + \langle q_h,\dive \bu_h^{n+1}\rangle_\Omega=   \langle(\rho_h^{n+1})^+(0,-g)^{\top},\bv_h\rangle_\Omega\, ,\label{divuBDF2}
\end{align}
where $\mathcal{C}^{n}\phi := \hat{\bu}_h^{n}\cdot\nabla\phi +(\dive\hat{\bu}_h^{n}/2)\,\phi$. Since we do not have $\bu_h^{-2} $ and $\bu_h^{-1}$, the method is not self-starting; the first two steps are therefore computed using backward Euler instead. The unconditional stability for this scheme can be proved using, e.g., the techniques presented in \cite{Espinoza2025}.

Here, using a bound-preserving scheme is indispensable: if the density becomes negative, the square roots in Eq.~\eqref{divuBDF2} cannot be computed, so the method breaks down. To test that, we run the simulation with and without bound preservation, for different meshes. The admissible interval enforced for the density is $[\rho_{\mathrm{I}},\rho_{\mathrm{II}}] = [1,3]$, as fulfilled by the exact solution $\rho$ for all $t$. We use the inf-sup-stable Taylor--Hood element $\mathbb{P}_2^2\times \mathbb{P}_1^{}$ for velocity and pressure, and $\mathbb{P}_1$ for the density. We have used three meshes, $M_{RT1}$, $M_{RT2}$ and $M_{RT3}$, with $25\times200$, $50\times400$ and $100\times800$ elements, respectively, and a time-step size $\tau\sqrt{g/a} = 10^{-3}$. Furthermore, we set $\alpha = 1.0$ in the stabilisation form, $\omega = 0.05$ in the Richardson iteration, and a history depth $m = 10$ for the Anderson acceleration.

In Figure~\ref{fig:case2_max_min_density_over_time} we depict the evolution of the minimum and maximum values for the discrete density for both the baseline LS discretisation and the current method. While we see that the current method delivers a solution that respects the bounds throughout all the computational time, the baseline violates them almost from the start of the computation. Moreover, for all meshes there was a time at which the minimum density provided by the LS method becomes negative. At that point, the momentum solver breaks down due to coefficients $\sqrt{\rho}$, which can no longer be computed. The time at which the first negative value is reached increases modestly with mesh refinement ($t\sqrt{g/a}=2.4, 3$, and $3.2$, respectively), while the number of density degrees of freedom quadruples with each refinement.

Figure~\ref{fig:bubbleHeight} depicts how the top and bottom bubbles move in time, comparing our method, the baseline least-squares discretisation, and reference values from \cite{Guermond2000} and \cite{Try88}. All the methods seem to give similar results in this metric (at least for as long as the reference data go). Nevertheless, a very interesting picture appears when we show cross sections of the density in Figures \ref{fig:case2_MRT1_densityoververtialline_t2and265} and \ref{fig:case2_densityoververtialline_t2and3}. The solution given by the baseline LS discretisation is polluted with spurious oscillations, which were virtually eliminated by our method. Figure~\ref{fig:snapshots} shows the discrete solution for both methods; we observe the intricate pattern of density that explains the local variations (within the admissible range) between the two methods.

These results agree with previous numerical evidence on NBP methods, hinting at the fact that, in addition to ensuring that the discrete solution remains within the admissible values, the nodally bound-preserving method also seems to have a substantial stabilising effect that tends to remove oscillations altogether. While there is not a formal proof for this fact, there is enough numerical evidence showing that, at the very least, spurious oscillations are all but removed.

\begin{figure}[htbp]
  \centering
\includegraphics[width=0.8\textwidth]{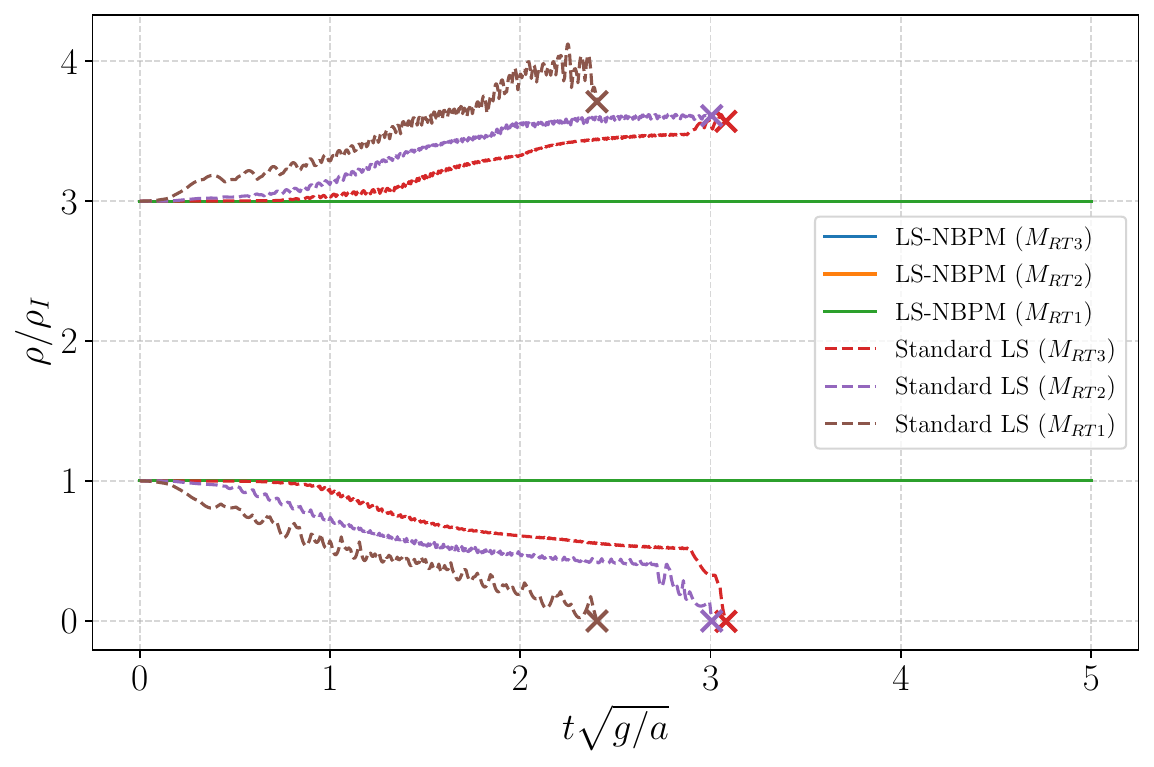}
  \caption{
  Maximum and minimum density values for the LS-NBP and the standard method using  $M_{RT1},\;M_{RT2}, \textrm{and} \;M_{RT3}$ meshes. The crosses indicate the point in time at which the simulation broke down because the density lost positivity. 
  }
  \label{fig:case2_max_min_density_over_time}
\end{figure}

\begin{figure}[htbp]
  \centering
\includegraphics[width=0.8\textwidth]{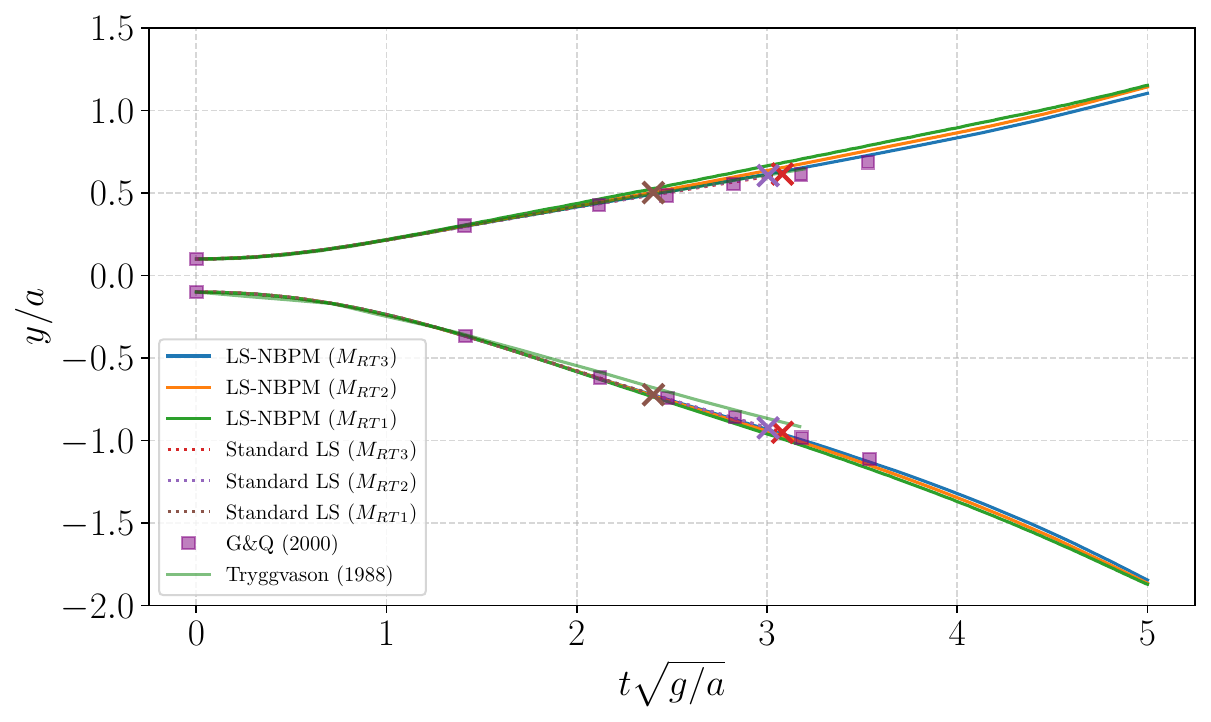}
  \caption{
  Top and bottom bubble height values for the LS-NBP and the standard method using  $M_{RT1},\;M_{RT2}, \textrm{and} \;M_{RT3}$ meshes compared with references. }\label{fig:bubbleHeight}
\end{figure}

\begin{figure}[htbp]
    \centering
    \begin{subfigure}[b]{0.48\textwidth}
        \centering
        \includegraphics[width=\textwidth]{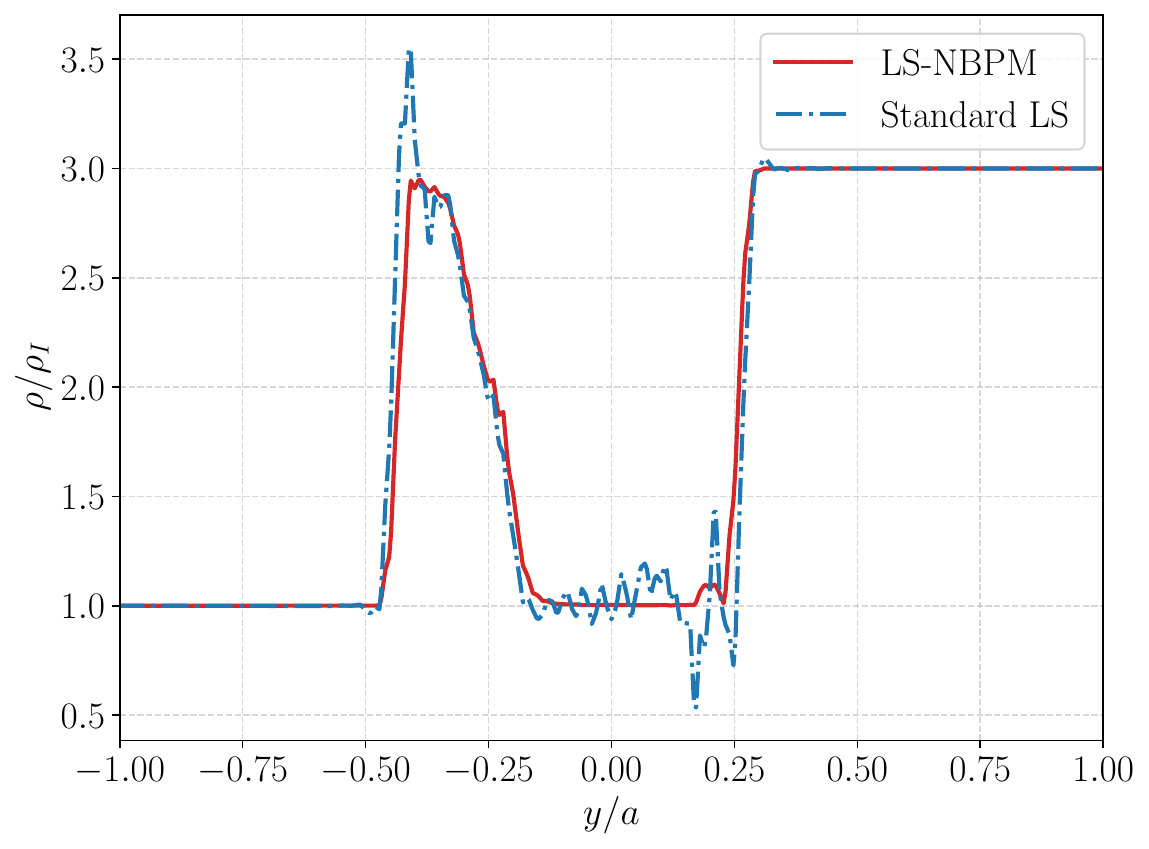}
        \caption{$t\sqrt{g/a}=2$}
        \label{fig:case2_MRT1_density_verticalline_t2}
    \end{subfigure}
    \hfill
    \begin{subfigure}[b]{0.48\textwidth}
        \centering
        \includegraphics[width=\textwidth]{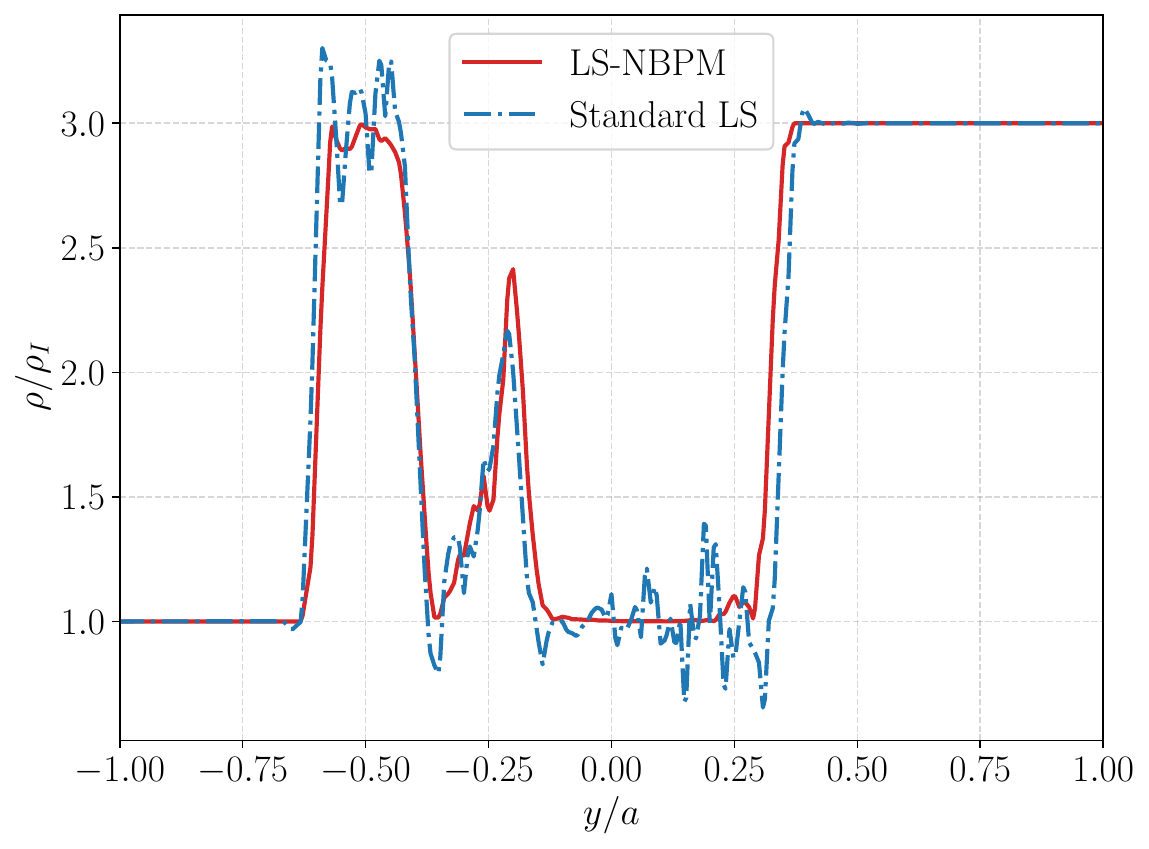}
         \caption{$t\sqrt{g/a}=2.4$}
         \label{fig:case2_MRT1_density_verticalline_t265}
    \end{subfigure}
    \caption{Density values along the vertical line $x/a = 0.25$, at specified times, using the $M_{RT1}$ mesh.}
    \label{fig:case2_MRT1_densityoververtialline_t2and265}
\end{figure}

\begin{figure}[htbp]
    \centering
    \begin{subfigure}[b]{0.48\textwidth}
        \centering
        \includegraphics[width=\textwidth]{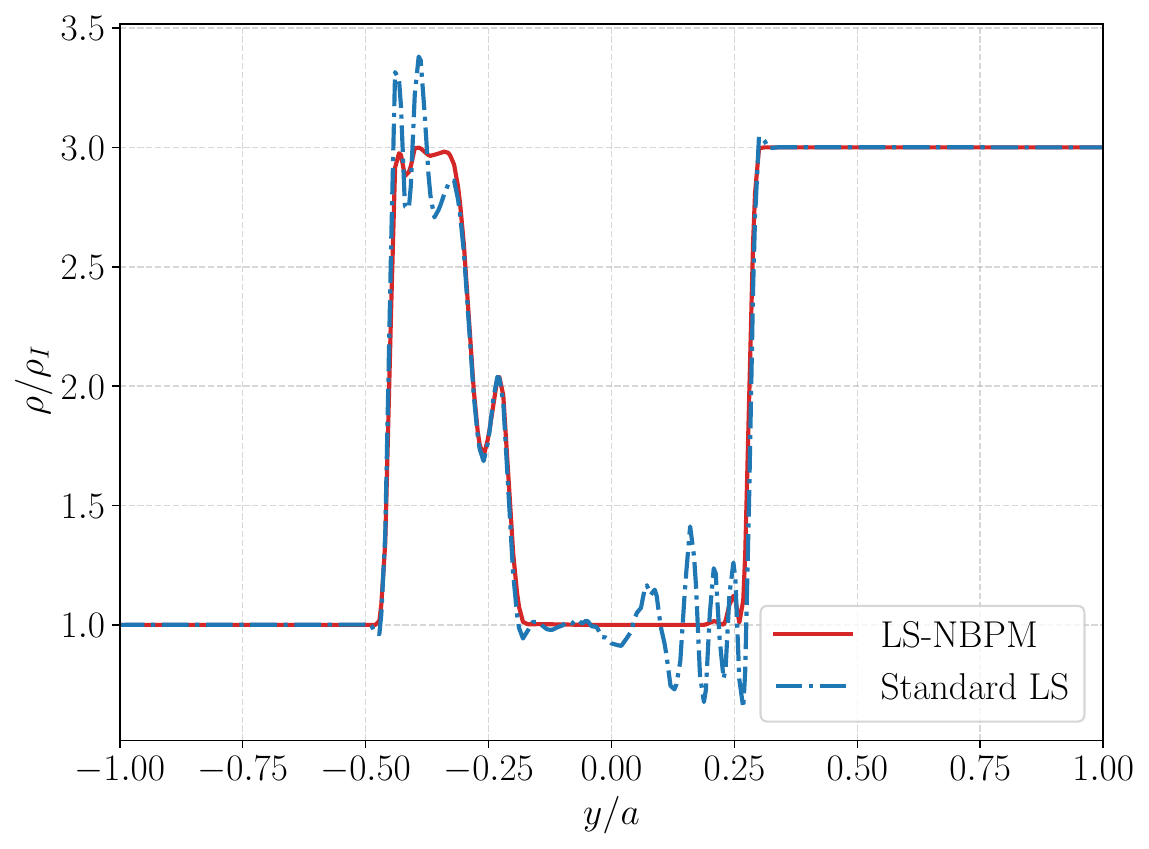}
        \caption{$t\sqrt{g/a}=2$}
        \label{fig:case2_density_verticalline_t2}
    \end{subfigure}
    \hfill
    \begin{subfigure}[b]{0.48\textwidth}
        \centering
        \includegraphics[width=\textwidth]{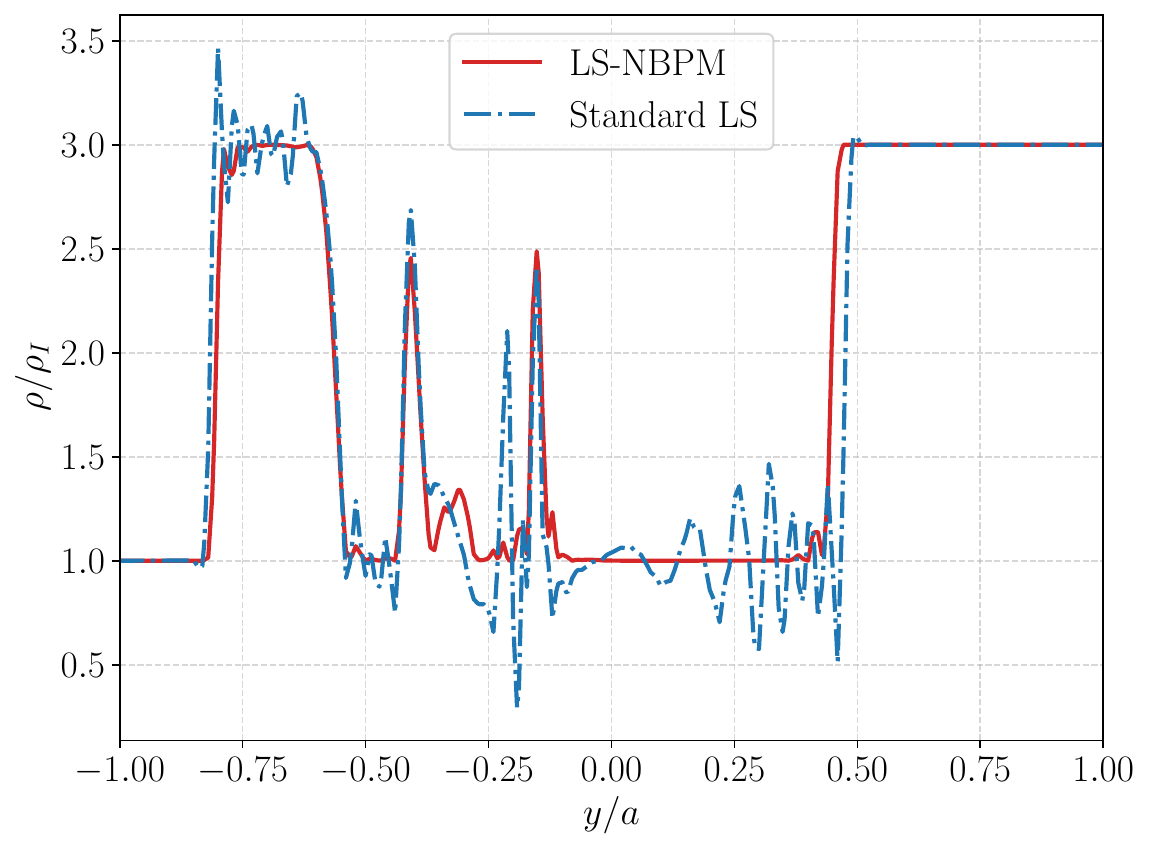}
         \caption{$t\sqrt{g/a}=3$}
         \label{fig:case2_density_verticalline_t3}
    \end{subfigure}
    \caption{Density values along the vertical line $x/a = 0.25$, at specified times, using the $M_{RT2}$ mesh.}
    \label{fig:case2_densityoververtialline_t2and3}
\end{figure}

\begin{figure}[htbp]
  \centering
\includegraphics[width=0.95\textwidth]{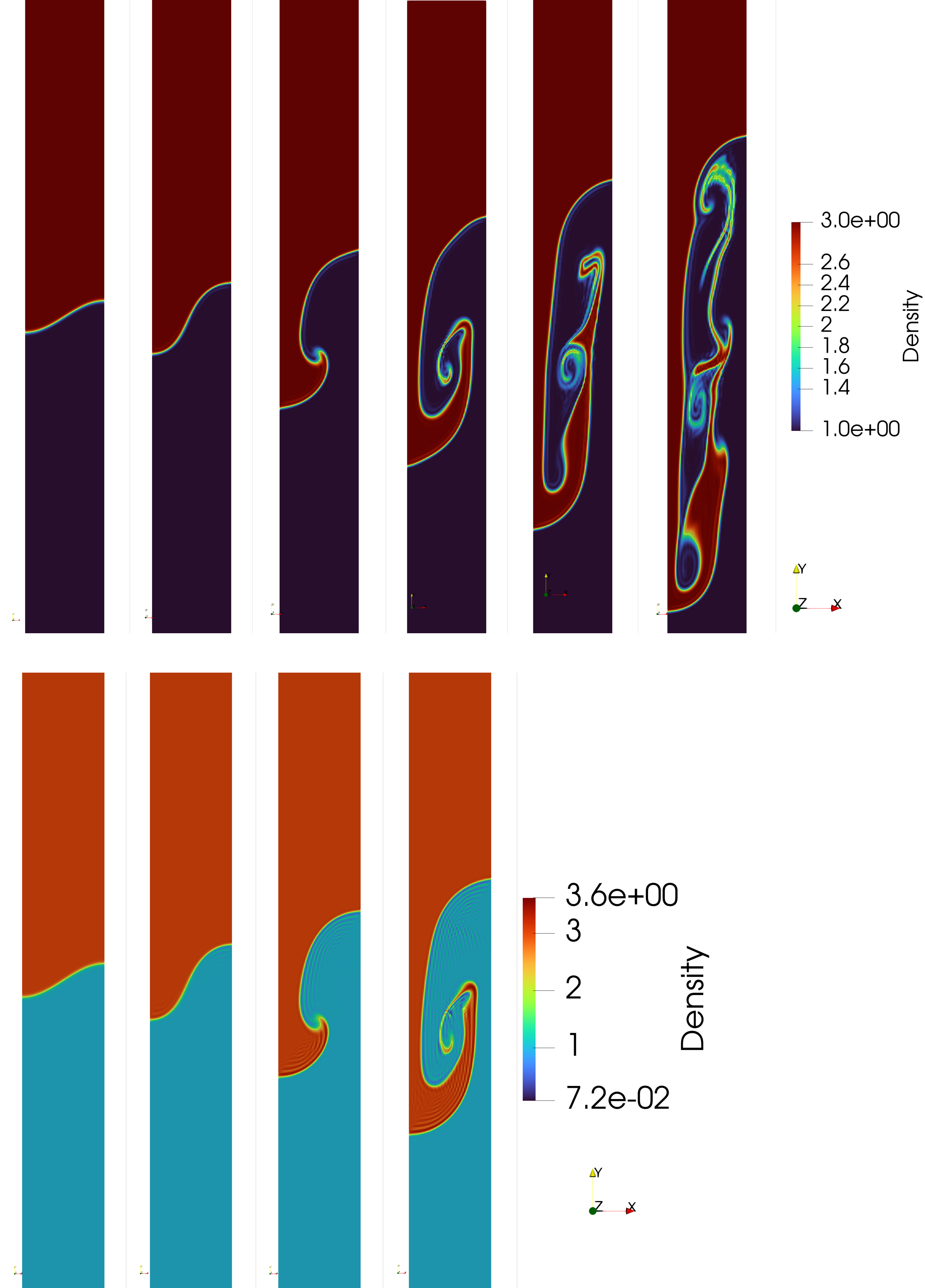}
  \caption{
  Snapshots: LS-NBPM (top) and standard LS (bottom) methods using  $M_{RT2}$ mesh at times $t\sqrt{g/a}=0,\; 1,\; 2,\; 3,\; 4,\; 5$ (top), and times $t\sqrt{g/a}=0,\; 1,\; 2,\; 3$ (bottom). }\label{fig:snapshots}
\end{figure}

\section{Concluding remarks}\label{Sec:Concl}
This work has extended the nodally bound-preserving finite element method to the linear advection equation. We have used a least-squares baseline to yield unconditional temporal stability, especially for the Crank--Nicolson scheme. Additionally, with least-squares we get symmetric algebraic problems at each nonlinear iteration---which enables using efficient iterative solvers, such as preconditioned conjugate gradients. We have proved stability and convergence without any time-step restrictions, whereas most existing second-order bound-preserving schemes induce CFL conditions. Another contribution was using Anderson acceleration to reduce the number of nonlinear iterations. This approach has yielded substantial gains, in many cases over a ten-fold reduction in the overall iteration count. 

We have also tested our method in a more complex scenario where the advection equation is part of a two-phase incompressible flow solver. In that case, bound preservation has proved essential for achieving stable simulations. Our results indicate that, besides preserving extrema, nodally bound-preserving methods have an important stabilising effect, essentially eliminating spurious spatial oscillations.

\section*{Acknowledgements}
This research was funded by the European Union under the Horizon Europe programme, Marie Skłodowska-Curie Actions (Grant Agreement No. 101236419).

\bibliographystyle{abbrv}
\bibliography{refs}

\begin{thebibliography}{10}

\bibitem{AlnaesEtal-2014}
M.~S. Alnaes, A.~Logg, K.~B. {\O}lgaard, M.~E. Rognes, and G.~N. Wells.
\newblock Unified form language: A domain-specific language for weak
  formulations of partial differential equations.
\newblock {\em {ACM} Transactions on Mathematical Software}, 40, 2014.

\bibitem{Amiri-Barrenechea-Pryer-2025}
A.~Amiri, G.~R. Barrenechea, and T.~Pryer.
\newblock A nodally bound-preserving finite element method for time-dependent
  convection–diffusion equations.
\newblock {\em J. Comput. Appl. Math.}, 470:116691, 2025.

\bibitem{ABP-26-M3AS}
A.~Amiri, G.~R. Barrenechea, and T.~Pryer.
\newblock A finite element method preserving the eigenvalue range of symmetric
  tensors.
\newblock {\em Mathematical Models and Methods in Applied Sciences}, 2026.
\newblock In press.

\bibitem{Anderson-1965}
D.~G. Anderson.
\newblock Iterative procedures for nonlinear integral equations.
\newblock {\em J. ACM}, 12(4):547–560, Oct. 1965.

\bibitem{petsc-user-ref-2026}
S.~Balay, S.~Abhyankar, M.~F. Adams, S.~Benson, J.~Brown, P.~Brune,
  K.~Buschelman, E.~M. Constantinescu, L.~Dalcin, A.~Dener, V.~Eijkhout,
  J.~Faibussowitsch, W.~D. Gropp, V.~Hapla, T.~Isaac, P.~Jolivet, D.~Karpeev,
  D.~Kaushik, M.~G. Knepley, F.~Kong, S.~Kruger, D.~A. May, L.~C. McInnes,
  R.~T. Mills, L.~Mitchell, T.~Munson, J.~E. Roman, K.~Rupp, P.~Sanan,
  J.~Sarich, B.~F. Smith, H.~Suh, S.~Zampini, H.~Zhang, H.~Zhang, and J.~Zhang.
\newblock {PETSc/TAO} users manual.
\newblock Technical Report ANL-21/39 - Revision 3.26, Argonne National
  Laboratory, 2026.

\bibitem{BarattaEtal-2023}
I.~A. Baratta, J.~P. Dean, J.~S. Dokken, M.~Habera, J.~S. Hale, C.~N.
  Richardson, M.~E. Rognes, M.~W. Scroggs, N.~Sime, and G.~N. Wells.
\newblock {DOLFINx}: the next generation {FEniCS} problem solving environment.
\newblock preprint, 2023.

\bibitem{BGPV-2024}
G.~R. Barrenechea, E.~H. Georgoulis, T.~Pryer, and A.~Veeser.
\newblock A nodally bound-preserving finite element method.
\newblock {\em IMA Journal of Numerical Analysis}, 44(4):2198--2219, 2024.

\bibitem{BoGu16}
P.~Bochev and M.~Gunzburger.
\newblock Chapter 12 - least-squares methods for hyperbolic problems.
\newblock In R.~Abgrall and C.-W. Shu, editors, {\em Handbook of Numerical
  Methods for Hyperbolic Problems}, volume~17 of {\em Handbook of Numerical
  Analysis}, pages 289--317. Elsevier, 2016.

\bibitem{Cai2021}
W.~Cai, B.~Li, and Y.~Li.
\newblock Error analysis of a fully discrete finite element method for variable
  density incompressible flows in two dimensions.
\newblock {\em ESAIM: Mathematical Modelling and Numerical Analysis},
  55:S103--S147, 2021.

\bibitem{CaJi88}
G.~F. Carey and B.~N. Jiang.
\newblock Least-squares finite elements for first-order hyperbolic systems.
\newblock {\em International Journal for Numerical Methods in Engineering},
  26(1):81--93, 1988.

\bibitem{Chen2020}
H.~Chen, J.~Mao, and J.~Shen.
\newblock {Error estimate of Gauge–Uzawa methods for incompressible flows
  with variable density}.
\newblock {\em Journal of Computational and Applied Mathematics}, 364:112321,
  2020.

\bibitem{dalcinpazklercosimo-2011}
L.~D. Dalcin, R.~R. Paz, P.~A. Kler, and A.~Cosimo.
\newblock {Parallel distributed computing using Python}.
\newblock {\em Advances in Water Resources}, 34(9):1124 -- 1139, 2011.
\newblock New Computational Methods and Software Tools.

\bibitem{Ern-Guermond-I-2021}
A.~Ern and J.-L. Guermond.
\newblock {\em Finite elements I - Approximation and interpolation}, volume~72
  of {\em Texts in Applied Mathematics}.
\newblock Springer, 2021.

\bibitem{Ern-Guermond-III-2021}
A.~Ern and J.-L. Guermond.
\newblock {\em Finite Elements III - First-Order and Time-Dependent PDEs},
  volume~74 of {\em Texts in Applied Mathematics}.
\newblock Springer, 2021.

\bibitem{Espinoza2025}
N.~Espinoza-Contreras, G.~R. Barrenechea, E.~Castillo, and D.~R. Pacheco.
\newblock {Unconditionally stable, linearised IMEX schemes for incompressible
  flows with variable density}.
\newblock {\em ESAIM: Mathematical Modelling and Numerical Analysis},
  59:2739--2761, 2025.

\bibitem{Godunov}
S.~K. Godunov.
\newblock A difference method for numerical calculation of discontinuous
  solutions of the equations of hydrodynamics.
\newblock {\em Mat. Sb. (N. S.)}, 47(89):271--306, 1959.

\bibitem{Guermond2000}
J.-L. Guermond and L.~Quartapelle.
\newblock {A projection FEM for variable density incompressible flows}.
\newblock {\em J. Comput. Phys.}, 165(1):167--188, 2000.

\bibitem{Guermond2009}
J.-L. Guermond and A.~Salgado.
\newblock {A splitting method for incompressible flows with variable density
  based on a pressure Poisson equation}.
\newblock {\em Journal of Computational Physics}, 228:2834--2846, 2009.

\bibitem{GueSal11}
J.-L. Guermond and A.~J. Salgado.
\newblock Error analysis of a fractional time-stepping technique for
  incompressible flows with variable density.
\newblock {\em SIAM Journal on Numerical Analysis}, 49(3):917--944, 2011.

\bibitem{Keith-FCOM}
B.~Keith and T.~M. Surowiec.
\newblock Proximal {G}alerkin: A structure-preserving finite element method for
  pointwise bound constraints.
\newblock {\em Found.~Comput.~Math.}, 26:385–481, 2026.

\bibitem{Stampacchia-2000}
D.~Kinderlehrer and G.~Stampacchia.
\newblock {\em An introduction to variational inequalities and their
  applications}, volume~31 of {\em Classics in applied mathematics}.
\newblock Society for Industrial and Applied Mathematics, 2000.

\bibitem{Kirby-NumMath}
R.~C. Kirby and D.~Shapero.
\newblock High-order bounds-satisfying approximation of partial differential
  equations via finite element variational inequalities.
\newblock {\em Numer. Math.}, 156(3):927--947, 2024.

\bibitem{KuzminH23}
D.~Kuzmin and H.~Hajduk.
\newblock {\em Property-Preserving Numerical Schemes for Conservation Laws}.
\newblock World Scientific, 2023.

\bibitem{LeVeque_2002}
R.~J. LeVeque.
\newblock {\em Finite Volume Methods for Hyperbolic Problems}.
\newblock Cambridge Texts in Applied Mathematics. Cambridge University Press,
  2002.

\bibitem{Pacheco2024}
D.~Pacheco.
\newblock On the temporal stability of least-squares methods for linear
  hyperbolic problems.
\newblock {\em Computers \& Mathematics with Applications}, 168:33--38, 2024.

\bibitem{Pan2024}
J.~Pan and W.~Cai.
\newblock {Error analysis of energy-conservative BDF2–FE scheme for the 2D
  Navier–Stokes equations with variable density}.
\newblock {\em Communications in Nonlinear Science and Numerical Simulation},
  136:108093, 2024.

\bibitem{Saad_2025}
Y.~Saad.
\newblock Acceleration methods for fixed-point iterations.
\newblock {\em Acta Numerica}, 34:805–890, 2025.

\bibitem{Salgado2013}
A.~Salgado.
\newblock A diffuse interface fractional time-stepping technique for
  incompressible two-phase flows with moving contact lines.
\newblock {\em ESAIM: Mathematical Modelling and Numerical Analysis},
  47:743--769, 2013.

\bibitem{BasixJoss-2022}
M.~W. Scroggs, I.~A. Baratta, C.~N. Richardson, and G.~N. Wells.
\newblock Basix: a runtime finite element basis evaluation library.
\newblock {\em Journal of Open Source Software}, 7(73):3982, 2022.

\bibitem{ScroggsEtal-2022}
M.~W. Scroggs, J.~S. Dokken, C.~N. Richardson, and G.~N. Wells.
\newblock Construction of arbitrary order finite element degree-of-freedom maps
  on polygonal and polyhedral cell meshes.
\newblock {\em ACM Transactions on Mathematical Software}, 48(2):{18:1--18:23},
  2022.

\bibitem{Shu2016}
C.-W. Shu.
\newblock {\em Discontinuous Galerkin Methods for Time-Dependent Convection
  Dominated Problems: Basics, Recent Developments and Comparison with Other
  Methods}, pages 371--399.
\newblock Springer International Publishing, Cham, 2016.

\bibitem{Try88}
G.~Tryggvason.
\newblock Numerical simulations of the {R}ayleigh-{T}aylor instability.
\newblock {\em J.~Comput.~Phys.}, 75(2):253--282, 1988.

\bibitem{Walker-Ni-2011}
H.~F. Walker and P.~Ni.
\newblock Anderson acceleration for fixed-point iterations.
\newblock {\em SIAM J. Numer. Anal.}, 49(4):1715–1735, Aug. 2011.

\bibitem{ZS2011}
X.~Zhang and C.-W. Shu.
\newblock Maximum-principle-satisfying and positivity-preserving high-order
  schemes for conservation laws: survey and new developments.
\newblock {\em Proceedings of the Royal Society A: Mathematical, Physical and
  Engineering Sciences}, 467(2134):2752--2776, 2011.

\end{thebibliography}
\end{document}